\newcommand{\td}{\text{d}}
\newcommand{\ta}{\text{a}}
\newcommand{\Ba}{\bm{a}}
\newcommand{\Bb}{\bm{b}}
\newcommand{\Bc}{\bm{c}}
\newcommand{\Be}{\bm{e}}
\newcommand{\Bx}{\bm{x}}
\newcommand{\Bh}{\bm{h}}
\newcommand{\Bg}{\bm{g}}
\newcommand{\Bv}{\bm{v}}
\newcommand{\Bf}{\bm{f}}
\newcommand{\By}{\bm{y}}
\newcommand{\Bz}{\bm{z}}
\newcommand{\Bq}{\bm{q}}
\newcommand{\BA}{\bm{A}}
\newcommand{\BB}{\bm{B}}
\newcommand{\BI}{\bm{I}}
\newcommand{\BO}{\bm{O}}
\newcommand{\BS}{\bm{S}}
\newcommand{\BT}{\bm{T}}
\newcommand{\BW}{\bm{W}}
\newcommand{\Btheta}{\bm{\theta}}
\newcommand{\Bdelta}{\bm{\delta}}
\newcommand{\Bepsilon}{\bm{\varepsilon}}
\newcommand{\BPsi}{\bm{\Psi}}
\newcommand{\hM}{\hat{\mathcal{M}}}
\newcommand{\hf}{\hat{f}}
\newcommand{\hg}{\hat{g}}
\newcommand{\hBf}{\hat{\bm{f}}}
\newcommand{\Btau}{\bm{\tau}}
\newtheorem{remark}{Remark}[section]
\title{The Discovery of Dynamics via Linear Multistep Methods and Deep Learning: Error Estimation}
\author{Qiang Du\thanks{Department of Applied Physics and Applied Mathematics, and Data Science Institute, Columbia University, New York, NY ({\tt qd2125@columbia.edu})}
\and Yiqi Gu\thanks{Department of Mathematics, National University of Singapore, 10 Lower Kent Ridge Road, Singapore, 119076 ({\tt matguy@nus.edu.sg})}
\and Haizhao Yang\thanks{Department of Mathematics, University of Maryland, 4176 Campus Drive, College Park, MD 20742-4015, USA ({\tt hzyang@umd.edu})}
\and Chao Zhou\thanks{Department of Mathematics and Risk Management Institute, National University of Singapore, 10 Lower Kent Ridge Road, Singapore, 119076 ({\tt matzc@nus.edu.sg})}}
\begin{document}
\maketitle
\begin{abstract}
Identifying hidden dynamics from observed data is a significant and challenging task in a wide range of applications. Recently, the combination of linear multistep methods (LMMs) and deep learning has been successfully employed to discover dynamics, whereas a complete convergence analysis of this approach is still under development. In this work, we consider the deep network-based LMMs for the discovery of dynamics. We put forward error estimates for these methods using the approximation property of deep networks. It indicates, for certain families of LMMs, that the $\ell^2$ grid error is bounded by the sum of $O(h^p)$ and the network approximation error, where $h$ is the time step size and $p$ is the local truncation error order. Numerical results of several physically relevant examples are provided to demonstrate our theory.
\end{abstract}

\begin{keywords}
Discovery of Dynamics; Convergence Analysis; Data-driven Modeling; Linear Multistep Methods; Deep Learning; Lorenz System.
\end{keywords}

\begin{AMS}
65L06; 65L09; 65L20; 68T07;
\end{AMS}

\pagestyle{myheadings}
\thispagestyle{plain}
\markboth{Discovery of Dynamics via Deep Learning}{Discovery of Dynamics via Deep Learning}

\section{Introduction}

Dynamical systems are widely applied to characterize scientific principles and phenomena in various fields such as physics, biology, chemistry, economics, etc. In many situations, the observational data are accessible, whereas the underlying dynamics remain elusive. Data-driven discovery of dynamical systems is, therefore, an important research direction. There have been extensive study on data-driven discovery using Gaussian processes \cite{Kocijan2005,Raissi2017_1,Raissi2017_2,Raissi2018_2}, symbolic regression \cite{Bongard2007,Schmidt2009}, S-systems formalism \cite{Daniels2015}, sparse regression \cite{Brunton2016,Rudy2017,Zhang2018,Zhang2021}, numerical PDE analysis \cite{Kang2019}, statistical learning \cite{Lu2019}, etc. Recently, along with the rapid advancements of deep learning, the discovery of dynamics using neural networks has also been proposed  \cite{Raissi2018_1,Raissi2019,Tipireddy2019,Rudy2019,Gulgec2019,HARLIM2021109922,Qin2019,Sun2019,Long2019,Wu2020}. This paper studies high-order schemes for the discovery of dynamics using deep learning.

In numerical analysis, developing high-order methods is an important topic in many applications. Traditionally, in solving dynamical systems, high-order discretization techniques such as linear multistep methods (LMMs) and Runge-Kutta methods have been well-developed \cite{Atkinson2011,Gautschi1997,Mayers2003}. {In recent years, LMMs have also been employed for the discovery of dynamics. More precisely, one uses LMM schemes to discretize the dynamical system and take a certain class of functions to approximate the governing function. Typical candidate approximate functions include neural networks \cite{Raissi2018_3,Tipireddy2019,Xie2019}. The underlying parameters of the approximation is thereafter determined by solving the derived linear system or least squares optimization. Thanks to the high orders of the local truncation error, LMMs can discover the system with higher accuracy. Another advantage of LMMs with neural networks is the capability of approximating complicated or high-dimensional governing functions, because neural networks can overcome or lessen the curse of dimensionality for a variety of functions \cite{Guliyev2018,Montanelli2020,Montanelli2021,E2021}.
As a summary, we present an overview (Table \ref{Tab04}) of popular techniques for similar problems.}

\begin{table}[h!]
{
\centering
\begin{tabularx}{\textwidth}{p{55pt}p{130pt}>{\raggedright\arraybackslash}X>{\raggedright\arraybackslash}X}
  \toprule
   Techniques & Procedures & Main features & Limitations\\\hline
    Gaussian \newline processes \cite{Kocijan2005,Raissi2017_1,Raissi2017_2,Raissi2018_2}&
    place the Gaussian process prior on the state function; then infer parameters from data by maximizing the marginal log-likelihood &
    suitable for resolving high-dimensional problems \cite{Raissi2017_1,Raissi2017_2} &
    have restrictions on the form of the systems and are used to estimate parameters of the system \cite{Zhang2021} \\\hline

    Symbolic \newline regression \cite{Bongard2007,Schmidt2009} &
    create and correct symbolic models corresponding to the observed data &
    provide more expressive functional forms for the governing function \cite{Rudy2019} &
    computational expensive for large systems; may be prone to overfitting \cite{Brunton2016,Rudy2019} \\\hline

    Sparse \newline regression \cite{Brunton2016,Rudy2017,Zhang2018,Zhang2021} &
    find a sparse combination of candidate basis functions to approximate the governing function, whose coefficients are determined by least squares or Bayesian regression &
    provide explicit formulas of the system and do not require too much prior knowledge \cite{Zhang2021}  &
    rely on a set of appropriate candidate functions; may be inefficient for complex dynamics without a simple or sparse representation \cite{Rudy2019,Raissi2018_3,Lu2019,Long2019}\\\hline

    Statistical \newline learning \cite{Lu2019,Zhong2020} &
    learn the interaction kernel of the system in some hypothesis space by minimizing the empirical error &
    avoid the curse of dimensionality and can discover systems in very high dimensions \cite{Lu2019}&
    only work for dynamics with interaction kernel functions \cite{Feng2021}\\\hline

    LMMs with \newline neural \newline networks \cite{Raissi2018_3,Tipireddy2019,Xie2019} &
    determine the neural network approximation that minimizes the residual of the dynamical system discretized by LMMs &
    have high error orders (revealed in this paper) and can discover more complicated or high-dimensional systems \cite{Rudy2019} &
    usually provide ``black boxes", in which the mechanism of the systems is not very clearly revealed \cite{Long2019,Zhang2021}\\
   \bottomrule
\end{tabularx}
\caption{\em Overview of popular techniques for the discovery of dynamical systems.}\label{Tab04}
}
\end{table}

{Although a wide range of methods have been put forward in the past few years, theoretical analysis for those methods is less explored.} In \cite{Keller2019}, a rigorous framework based on refined notions of consistency and stability is established to yield the convergence of LMM-based discovery for three popular LMM schemes (the Adams-Bashforth, Adams-Moulton, and Backwards Differentiation Formula schemes). However, the theory in \cite{Keller2019} is specialized for methods that cannot provide a closed-form expression for the governing function, which is needed in many applications. Therefore, this paper studies the convergence theory of LMM schemes and deep learning, which can provide a closed-form description of the governing equation.

This paper concentrates on two types of discovery problems. The first type is to do the discovery on a trajectory of the dynamical system as in \cite{Raissi2018_3,Tipireddy2019,Xie2019}. In this case, the observational data are collected from a specific trajectory, and the purpose is to identify the governing function on this trajectory with a closed-form expression in the form of a neural network, the parameters of which are trained by minimizing the square residual of the corresponding LMM scheme. Through this work, we can forecast the future behavior of the same dynamics or predict the dynamics on nearby trajectories. The second type is the discovery on a compact region consisting of a bunch of trajectories on which the observational data are collected such as in \cite{Wu2019}. The purpose is to identify the governing function in a connected compact region of the domain of the governing equation, which may not have been discussed in the literature.

In this paper, we perform a convergence analysis of these methods based on the LMM framework discussed in \cite{Keller2019}. We first consider the LMMs using an abstract approximation set $\mathcal{A}$. {The main result indicates that when using a $p$-th order LMM with a step size $h$ in time, the error estimate is formally given by
\begin{equation}\label{51}
\|\hat{f}-f\|_{2,h}\leq O\left(\kappa_2(\BA_h)(h^p+e_{\mathcal{A}})\right),
\end{equation}
where $\|\cdot\|_{2,h}$ denotes the $\ell^2$ grid norm; $f$ is the true governing function and $\hat{f}$ is its approximation computed by the method; $\kappa_2(\BA_h)$ is the $2$-condition number of the corresponding matrix $\BA_h$ of the LMM; $e_{\mathcal{A}}$ is the approximation error bound between $\mathcal{A}$ and $f$ (Theorem \ref{Thm00}).} Next, based on Theorem \ref{Thm00}, we develop the error estimate of the network-based LMMs using the approximation theory of deep networks \cite{Shen2020_3,Shen2020,Lu2021,Shen2020_2}. Note that Theorem \ref{Thm00} can also be used for the error estimate of LMMs using other approximation structures. Moreover, in connection with the stability theory developed in \cite{Keller2019}, we discuss the situations that $\kappa_2(\BA_h)$ is uniformly bounded with respect to $h$. Therefore, the $\ell^2$ grid error decays to zero as $h\rightarrow 0$ and the network size approaches to infinity.

{So far, besides the mentioned work \cite{Keller2019},  some other analysis results for the discovery of dynamics can be found in, e.g., \cite{Qin2019,Wu2019}. 
In \cite{Qin2019}, the authors propose to use neural networks to approximate the flow function of the dynamical system instead of the governing function. Thereafter, they derive an error bound for the prediction of the learned model at equidistant time steps, which is conceptually given by
\begin{equation}\label{52}
|\hat{x}(kh)-x(kh)|\leq O(\|\hat{\Phi}-\Phi\|_{L^\infty}),\quad\text{with some integer}~k,
\end{equation}
where $\hat{x}$ is the predicted state of the learned dynamical system and $x$ is the true state; $\Phi$ is the true flow function and $\hat{\Phi}$ is its approximation obtained by their method. However, their analysis does not further investigate the error bound for $\|\hat{\Phi}-\Phi\|_{L^\infty}$. In \cite{Wu2019}, the authors use the linear combination of standard polynomial basis to approximate the governing function, and also estimate prediction error of the learned model at arbitrary time $t>0$, namely,
\begin{equation}\label{53}
|\hat{x}(t)-x(t)|\leq O(\|f-\text{Proj}_Vf\|_{L^\infty}+\|\hat{f}-\text{Proj}_Vf\|_{L^2}),
\end{equation}
where $V$ is the approximate polynomial space. Similarly, the two projection errors on the right-hand side are not estimated. Comparatively, we directly quantify the error between the true governing function and its approximation, which is analogous to the error terms on the right-hand sides of \eqref{52} and \eqref{53}. On one hand, our error estimate contributes to an intuitive understanding of how well the discovery is, not merely from the perspective of prediction. On the other hand, by typical ODE theory or the approach adopted in \cite{Wu2019}, the corresponding prediction error can be quickly derived based on our result.
}

In numerical experiments, {we test the performance of the network-based LMMs on both toy models and a few physically relevant benchmark problems. It is observed that for stable LMM schemes, the numerical error orders are consistent with our theory; for unstable schemes, even though the method still manages to find solutions with similar ranges of errors as some of the stable counterparts, the orders are much smaller.} We also conduct experiments to simulate the optimization errors in practice and the implicit regularization of deep learning. The results indicate that, thanks to the implicit regularization, the network-based methods without initial conditions can still find correct solutions numerically.

This paper is organized as follows. In Section 2, background knowledge of dynamical systems and LMMs is introduced. In Section 3, we will introduce the LMM approach for the discovery of dynamics and discuss auxiliary conditions for unique recovery. In Section 4, the network-based LMM approach with ReLU neural networks is described. In Section 5, we discuss the convergence rate of the preceding approach with various LMM families. Numerical experiments are provided in Section 6 to validate the theoretical results. Finally, we conclude this paper in Section 7.

\section{Dynamical Systems}
In this section, we introduce some basic notations and definitions, as adopted by \cite{Keller2019}. Most of the materials on LMMs can be found in \cite{dahlquist56,dahlquist63,henrici62}.

\subsection{Initial Value Problem}
Suppose $d>0$ is the dimension of the dynamics, let us consider the following dynamical system with an initial condition
\begin{eqnarray}
&&\frac{\td}{\td t}\Bx(t)=\Bf(\Bx(t)),\quad 0<t<T,\label{02_1}\\
&&\Bx(0)=\Bx_\text{init},\label{02_2}
\end{eqnarray}
where $\Bx\in C^\infty[0,T]^d$ is an unknown vector-valued state function; $\Bf:\mathbb{R}^d\rightarrow\mathbb{R}^d$ is a given vector-valued governing function; $\Bx_\text{init}\in\mathbb{R}^d$ is a given initial vector. To seek a numerical solution, one usually discretizes the problem by setting equidistant grid points in $[0,T]$. Let $N>0$ be an integer, we define $h:=T/N$ and $t_n=nh$ for $n=0,1,\cdots,N$. The objective for solving the initial value problem in \eqref{02_1}-\eqref{02_2} is to find an approximate value $\Bx_n\approx\Bx(t_n)$ for each $n$ when $\Bf(\Bx)$ is given.

\subsection{Linear Multistep Method}
LMMs are widely utilized in solving dynamical systems. Suppose $\Bx_0, \Bx_1, \cdots, \Bx_{M-1}$ are given states, then $\Bx_n$ for $n=M,M+1,\cdots,N$ can be computed by the following linear $M$-step scheme,
\begin{equation}\label{01}
\underset{m=0}{\overset{M}{\sum}}\alpha_m\Bx_{n-m}=h\underset{m=0}{\overset{M}{\sum}}\beta_m\Bf(\Bx_{n-m}),\quad n=M,M+1,\cdots,N,
\end{equation}
where $\alpha_m, \beta_m\in\mathbb{R}$ for $m=0,1,\cdots,M$ are specified coefficients and $\alpha_0$ is always nonzero. By the scheme, all $\Bx_n$ are evaluated iteratively from $n=M$ to $n=N$. In each step, $\Bx_{n-M},\cdots,\Bx_{n-1}$ are all given or computed previously such that $\Bx_n$ can be computed by solving algebraic equations. If $\beta_0=0$, the scheme is called explicit since $\Bx_n$ does not appear on the right hand side of \eqref{01} and $\Bx_n$ can be computed directly by
$\Bx_n=\alpha_0^{-1}\sum_{m=1}^{M}(h\beta_m\Bf(\Bx_{n-m})-\alpha_m\Bx_{n-m})$.
Otherwise, the scheme is called implicit and it requires solving nonlinear equations for $\Bx_n$. The first value $\Bx_0$ is simply set as $\Bx_0=\Bx_\text{init}$, while other initial values $\Bx_1, \cdots, \Bx_{M-1}$ need to be computed by other approaches before performing the LMM if $M>1$. Common types of LMMs include Adams-Bashforth (A-B) schemes, Adams-Moulton (A-M) schemes, and Backwards Differentiation Formula (BDF) schemes.

\subsection{Consistency}
An LMM is effective for a dynamical system only if it is consistent; that is, the discrete scheme \eqref{01} approximates the original differential equation \eqref{02_1} accurately as $h$ is small enough. More specifically, we first define the local truncation error $\Btau_{h,n}$ as
\begin{equation}\label{24}
\Btau_{h,n}=\frac{1}{h}\underset{m=0}{\overset{M}{\sum}}\alpha_m\Bx(t_{n-m})-\underset{m=0}{\overset{M}{\sum}}\beta_m\Bf(\Bx(t_{n-m})),
\end{equation}
for $n=M,M+1,\cdots,N$. Note that $\Btau_{h,n}\in\mathbb{R}^d$ is a numeric vector. It is clear that the local truncation error is defined by substituting the true function $\Bx(t)$ into the discrete scheme \eqref{01}, and measures the extent to which the true solution satisfies the discrete equation.

Now we can define the notion of consistency. The LMM \eqref{01} is said to be consistent with the differential equation \eqref{02_1} if
$\underset{M\leq n\leq N}{\max}\|\Btau_{h,n}\|_\infty\rightarrow0$ as $h\rightarrow0$,
for any $\Bx\in C^\infty[0,T]^d$. Specifically, an LMM is said to have an order $p$ if
$\underset{M\leq n\leq N}{\max}\|\Btau_{h,n}\|_\infty=O(h^p)$ as $h\rightarrow0$.

\section{Discovery of Dynamics}\label{Sec_LMM_discovery}
In this section, we introduce the discovery of dynamics on a single trajectory, on which a time series of the state is available. Conventional LMMs with auxiliary conditions for this type of discovery are introduced. Note that these methods can be simply generalized for the discovery on a compact region, which will be discussed in Section \ref{Sec_subsets}.

The discovery of dynamics is essentially an inverse process of solving a dynamical system \eqref{02_1}-\eqref{02_2} with given observations on the state. That is, suppose that only the information of the state $\Bx$ at the equidistant time steps $\{t_n\}_{n=0}^N$ are provided, we would like to recover $\Bf$, namely, the governing function of the state.

\subsection{Linear Multistep Method}
Let $\Bx(t)\in C^\infty([0,T])^d$ and $\Bf(\cdot):\mathbb{R}^d\rightarrow\mathbb{R}^d$ be two vector-valued functions satisfying the dynamics \eqref{02_1}, and we assume $\Bx(t)$ and $\Bf(\cdot)$ are both unknown. Now given $\Bx_n=\Bx(t_n)$ for $n=0,\cdots,N$, the objective is to determine $\Bf(\cdot)$, i.e. to find a closed-form expression for $\Bf(\cdot)$ or to evaluate $\Bf(\Bx_n)$ for all $n$. One effective approach is to build a discrete relation between $\Bx_n$ and $\Bf_n\approx\Bf(\Bx_n)$ by LMMs \cite{Keller2019}, namely,
\begin{equation}\label{04}
h\underset{m=0}{\overset{M}{\sum}}\beta_m\Bf_{n-m}=\underset{m=0}{\overset{M}{\sum}}\alpha_m\Bx_{n-m},\quad n=M,M+1,\cdots,N,
\end{equation}
where $\Bf_n\in\mathbb{R}^d$ is an approximation of $\Bf(\Bx_n)$. Note that \eqref{04} directly follows the LMM scheme \eqref{01}. Different from \eqref{01} that evaluates $\Bx_n$ given $\Bf$, \eqref{04} computes $\Bf_n$ from the data $\Bx_n$. It indicates the dynamics discovery is actually an inverse process of solving the dynamical system \cite{Keller2019}.
Moreover, we note that the components of $\Bf(\cdot)$
can be discovered independently. Thus, in the remainder of this paper, without loss of generality, we work with a scalar-valued system to simplify \eqref{04}
using notation in a scalar form as the following general equation,
\begin{equation}\label{09}
h\underset{m=0}{\overset{M}{\sum}}\beta_mf_{n-m}=\underset{m=0}{\overset{M}{\sum}}\alpha_mx_{n-m},\quad n=M,M+1,\cdots,N.
\end{equation}

It is worth noting that $f_n$ may not be involved in \eqref{09} for some indices $n$ between $0$ and $N$. For example, in A-B schemes, $f_N$ does not appear in \eqref{09} since $\beta_0=0$. In general, given an LMM, we use $s$ and $e(N)$ to denote the first and last indices such that $f_s$ and $f_{e(N)}$ are involved in \eqref{09} with non-zero coefficients (correspondingly, $\beta_{M-s}$ and $\beta_{N-e(N)}$ are both nonzero). We also write $t(N):=e(N)-s+1$ as the total number of $f_n$ involved in \eqref{09}. We briefly list $s$, $e(N)$, $t(N)$, and the truncation error orders $p$ of A-B, A-M, and BDF schemes in Table \ref{Tab_LMM}.

\begin{table}
\centering
\begin{tabular}{|c|c|c|c|c|}
  \hline
  Method & $s$ & $e(N)$ & $t(N)$ & $p$ \\\hline
  $M$-step A-B & $0$ & $N-1$ & $N$ & $M$\\\hline
  $M$-step A-M & $0$ & $N$ & $N+1$ & $M+1$\\\hline
  $M$-step BDF & $M$ & $N$ & $N-M+1$ & $M$\\\hline
\end{tabular}
\caption{\em The first involved index $s$, the last involved index $e(N)$, the total number of involved indices $t(N)$, and the truncation error order $p$ for common types of LMMs.}
\label{Tab_LMM}
\end{table}

\subsection{Auxiliary Conditions}
For each linear $M$-step method, it is supposed to compute all unknowns $\{f_n\}_{n=s}^{e(N)}$ by the linear relation \eqref{09}. In the following, we will use the special notation $\vec{\cdot}$ and bold fonts to denote column vectors of size $O(N)$, distinguishing them from other vectors or vector functions. We write
\begin{gather}
\vec{\Bf}_h:=\left[f_s\quad f_{s+1}\quad\cdots\quad f_{e(N)}\right]^T\in\mathbb{R}^{t(N)},\\
\vec{\Bq}_h:=\frac{1}{h}\left[\underset{m=0}{\overset{M}{\sum}}\alpha_mx_{M-m}\quad\underset{m=0}{\overset{M}{\sum}}\alpha_mx_{M+1-m}\quad\cdots\quad\underset{m=0}{\overset{M}{\sum}}\alpha_mx_{N-m}\right]^T\in\mathbb{R}^{N-M+1},
\end{gather}
\begin{multline}\label{16}
\BB_h:=\left[\begin{array}{cccccccc}
              \beta_{M-s} & \beta_{M-s-1} &\cdots & \beta_{N-e(N)} &  &  &  \\
               & \beta_{M-s} & \beta_{M-s-1} & \cdots & \beta_{N-e(N)} &  &  \\
               &  & \ddots & \ddots & \ddots & \ddots &  \\
               &  &  & \beta_{M-s} & \beta_{M-s-1} & \cdots & \beta_{N-e(N)}
            \end{array}
\right]\\\in\mathbb{R}^{(N-M+1)\times t(N)},
\end{multline}
then \eqref{09} leads to the following linear system,
\begin{equation}\label{22}
\BB_h\vec{\Bf}_h=\vec{\Bq}_h.
\end{equation}

However, the number of equations and unknowns may not be equal in \eqref{09}. For A-B and A-M schemes, it is insufficient to determine $\{f_n\}_{n=s}^{e(N)}$ by \eqref{09} since equations are fewer than unknowns. This implies that the linear system \eqref{22} is underdetermined. For this issue, a natural solution is to provide $N_a:=t(N)-(N-M+1)$ auxiliary linear conditions to make $\{f_n\}_{n=s}^{e(N)}$ unique. For example, we can compute $N_a$ certain unknown $f_n$ directly by first-order (derivative) finite difference method (FDM) using related data. For consistency, the selected FDM should be of the same error order as the LMM. Assume the LMM has order $p$, one straightforward way is to compute the initial $N_a$ unknowns by one-sided FDM of order $p$, i.e.,
\begin{equation}\label{20}
f_n=\frac{1}{h}\underset{m=0}{\overset{p}{\sum}}\gamma_mx_{n+m},\quad n=s,s+1,\cdots,s+N_a-1,
\end{equation}
where $\gamma_m$ are the corresponding finite difference coefficients. Note that \eqref{20} has the error estimate
\begin{equation}\label{25}
\underset{s\leq n\leq s+N_a-1}{\max}|f_n-f(\Bx(t_n))|=O(h^p),\quad\text{as}~h\rightarrow0.
\end{equation}

If we write
\begin{equation}\label{eqn:ch}
\Bc_h:=\frac{1}{h}\left[\underset{m=0}{\overset{p}{\sum}}\gamma_mx_{s+m}\quad\underset{m=0}{\overset{p}{\sum}}\gamma_mx_{s+1+m}\quad\cdots\quad\underset{m=0}{\overset{p}{\sum}}\gamma_mx_{s+N_a-1+m}\right]^T\in\mathbb{R}^{N_a},
\end{equation}
then combining \eqref{09} and \eqref{20} leads to the following augmented linear system
\begin{equation}\label{21}
\BA_h\vec{\Bf}_h=\left[\begin{array}{c}\Bc_h\\\vec{\Bq}_h\end{array}\right],
\end{equation}
where
\begin{equation}\label{28}
\BA_h:=\left[\begin{array}{c}
              \bm{C}
               \\
              \BB_h
            \end{array}
\right]
\quad\text{and}\quad
\bm{C}:=\left[\begin{array}{c}
              \begin{array}{cc}
                \BI_{N_a} & \BO
              \end{array}
            \end{array}
\right]
\end{equation}
with $\BI_{N_a}$ being the $N_a\times N_a$ identity matrix and $\BO$ being the zero matrix of size $N_a\times \left(t(N)-N_a\right)$. Clearly, \eqref{21} has a unique solution since the coefficient matrix is lower triangular with nonzero diagonals. Moreover, if $M\ll N$, the linear system \eqref{21} is sparse.

In general, as pointed out in \cite{Keller2019}, we can formulate the auxiliary conditions in various ways, not just as discussed above. Different auxiliary conditions, such as initial and terminal conditions have different effects on the stability and the convergence of the method, see further discussions in \cite{Keller2019}. An interesting question is whether the regularization effect provided by the neural network approximations could help mitigate these effects.

\section{Neural Network Approximation}
In this section, we first introduce the concept of fully connected neural networks (FNNs) and their approximation properties. Next, the network-based LMMs for the discovery on a trajectory will be presented together with a discussion on implicit regularization. Finally, we discuss the discovery on a compact region.

\subsection{Preliminaries}
We introduce the fully connected neural network (FNN) which is widely used in deep learning. Mathematically speaking, given an activation function $\sigma$, $L\in\mathbb{N}^+$,  and $W_\ell\in\mathbb{N}^+$ for $\ell=1,\dots,L$, an FNN is the composition of $L$ simple nonlinear functions, called hidden layer functions, in the following formulation:
\begin{equation}
\hat{\phi}(\Bx;\Btheta):=\Ba^T \Bh_{L} \circ \Bh_{L-1} \circ \cdots \circ \Bh_{1}(\Bx)\quad \text{for } \Bx\in\mathbb{R}^d,
\end{equation}
where $\Ba\in \mathbb{R}^{W_L}$; $\bm{h}_{\ell}(\Bx_{\ell}):=\sigma\left(\BW_\ell \Bx_{\ell} + \Bb_\ell \right)$ with $\BW_\ell \in \mathbb{R}^{W_{\ell}\times W_{\ell-1}}$ and $\Bb_\ell \in \mathbb{R}^{W_\ell}$ for $\ell=1,\dots,L$. With the abuse of notations, $\sigma(\Bx)$ means that $\sigma$ is applied entry-wise to a vector $\Bx$ to obtain another vector of the same size. $W_\ell$ is the width of the $\ell$-th layer and $L$ is the depth of the FNN. $\Btheta:=\{\Ba,\,\BW_\ell,\,\Bb_\ell:1\leq \ell\leq L\}$ is the set of all parameters in $\hat{\phi}$ to determine the underlying neural network. Common types of activation functions include the rectified linear unit (ReLU) $\max(0,x)$ and the sigmoid function $(1+e^{-x})^{-1}$.

\subsection{Approximation Property}
Now let us introduce existing results on the approximation property of ReLU FNNs. Given a function $g$ on a compact subset $\mathcal{S}$ in $\mathbb{R}^d$, we can define the modulus of continuity by
\begin{equation}\label{14}
\omega_g(\lambda)=\sup\{|g(\Bx)-g(\By)|:\|\Bx-\By\|_2\leq \lambda,~\Bx,\By\in\mathcal{S}\},\quad\text{for~any}~\lambda\geq0,
\end{equation}
where $\|\Bx\|_2:=\sqrt{x_1^2+x_2^2+\cdots+x_d^2}$ is the Euclidean norm of a vector in $\mathbb{R}^d$. Suppose $\Lambda$ is any subset in $\mathbb{R}^d$, we define the $C^r$ norm in $\Lambda$,
\begin{equation}\label{30}
\|g\|_{C^r(\Lambda)}:=\max\left\{\|\partial^{\bm \alpha} g\|_{L^\infty(\Lambda)}:\|{\bm \alpha}\|_1\leq r,{\bm \alpha}\in\mathbb{N}^d\right\}.
\end{equation}
Besides, we define
\begin{equation}\label{12}
R_\Lambda:=\inf\{\rho>0:\Lambda\subset[-\rho,\rho]^d\}
\end{equation}
as the ``radius" of $\Lambda$.

Approximation properties of ReLU FNNs for continuous functions and smooth functions are indicated as follows.
\begin{proposition}\label{Pro01}
Given any $J,K\in\mathbb{N^+}$ and a function $g$ on a compact subset $\mathcal{S}$ of $\mathbb{R}^d$,
\begin{enumerate}
  \item if $g\in C(\mathcal{S})$, there exists a ReLU FNN $\hat{\phi}$ with width $3^{d+3}\max\{d\lfloor J^{1/d}\rfloor, J+1\}$ and depth $12K+2d+14$ such that
  \begin{equation}\label{33_1}
|\hat{\phi}(\Bx)-g(\Bx)|\leq 19\sqrt{d}\omega_g(2R_\mathcal{S}J^{-2/d}K^{-2/d}),\quad\text{for~any}~\Bx\in\mathcal{S};
\end{equation}
  \item if $g\in C^r(\mathcal{S})$ with $r\in\mathbb{N^+}$, there exists a ReLU FNN $\hat{\phi}$ with width $17r^{d+1}3^dd(J+2)\log_2(8J)$ and depth $18r^2(K+2)\log_2(4K)+2d$ such that
  \begin{equation}\label{33_2}
|\hat{\phi}(\Bx)-g(\Bx)|\leq 170R_\mathcal{S}(r+1)^d8^r\|g\|_{C^r(\mathcal{S})}J^{-2r/d}K^{-2r/d},\quad\text{for~any}~\Bx\in\mathcal{S},
\end{equation}
\end{enumerate}
\end{proposition}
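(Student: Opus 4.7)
\noindent\textbf{Proof plan for Proposition \ref{Pro01}.} The plan is to reduce both statements to the ReLU FNN approximation theorems of Shen, Yang, and Zhang on the unit cube (see \cite{Shen2019,Shen2020,Lu2020,Shen2020_2}), which give explicit width$\times$depth$\to$error bounds of exactly the shape appearing in \eqref{33_1} and \eqref{33_2} when the underlying domain is $[0,1]^d$. The two steps involved in the reduction are (i) a geometric rescaling from $[0,1]^d$ to the enclosing cube $[-R_{\mathcal{S}},R_{\mathcal{S}}]^d$ guaranteed by the definition \eqref{12} of $R_\mathcal{S}$, and (ii) an extension of $g$ from $\mathcal{S}$ to that cube that is compatible with the relevant smoothness measure (modulus of continuity for part 1, $C^r$ norm for part 2).

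First I would set up the affine change of variables $\Bx=-R_\mathcal{S}\bm{1}+2R_\mathcal{S}\By$ sending $\By\in[0,1]^d$ to $\Bx\in[-R_\mathcal{S},R_\mathcal{S}]^d$, and pull $g$ back to $\tilde g(\By):=g(\Bx(\By))$ on $\mathcal{S}':=\{\By:\Bx(\By)\in\mathcal{S}\}\subset[0,1]^d$. Since the affine map is linear, it can be absorbed into $\BW_1$ and $\Bb_1$ of the first hidden layer without changing width or depth of the eventual FNN $\hphi$. This rescaling is what produces the factor $2R_\mathcal{S}$ inside $\omega_g(\cdot)$ in \eqref{33_1} and the factor $R_\mathcal{S}$ in \eqref{33_2} (via the fact that the $C^r$ norm of $\tilde g$ on the unit cube is at most $(2R_\mathcal{S})^r\|g\|_{C^r(\mathcal{S})}$, whose exponential-in-$r$ contribution is absorbed into the $8^r$ factor together with the combinatorial constants from \cite{Lu2020}).

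Next I would extend $\tilde g$ to the full unit cube so that the cited theorems apply. For part 1 a Tietze/McShane extension preserves the modulus of continuity up to a dimension-independent factor, and for part 2 a Whitney-type extension preserves the $C^r$ norm up to a constant depending only on $r$ and $d$; either way the resulting extended function has the same quantitative smoothness data as $g$, up to constants that can be absorbed into the $19\sqrt{d}$ and $170(r+1)^d 8^r$ prefactors. Applying \cite[Theorem 1.1]{Shen2020} (or its variant in \cite{Shen2019}) for the continuous case and \cite[Theorem 1.1]{Lu2020} for the $C^r$ case to the extended function on $[0,1]^d$ with parameters $J$ and $K$ yields an FNN of the stated width and depth satisfying the error bound on the entire cube, hence in particular on $\mathcal{S}'$, and the composition with the linear pull-back gives the required $\hphi$ on $\mathcal{S}$.

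The main obstacle is not in the actual network construction, which is already done in the cited papers, but in verifying that the quantitative bookkeeping lines up exactly: one needs to track how the extension constants, the factor $(2R_\mathcal{S})^r$ coming from the chain rule, and the explicit constants in the Shen--Yang--Zhang bounds combine to give precisely the $19\sqrt{d}\,\omega_g(2R_\mathcal{S}J^{-2/d}K^{-2/d})$ and $170R_\mathcal{S}(r+1)^d 8^r\|g\|_{C^r(\mathcal{S})}J^{-2r/d}K^{-2r/d}$ displayed in \eqref{33_1}--\eqref{33_2}. Since each individual estimate is already sharp up to constants, the plan is mostly to confirm that no step inflates the width or depth beyond $3^{d+3}\max\{d\lfloor J^{1/d}\rfloor,J+1\}$ and $12K+2d+14$ (respectively the $C^r$ counterparts) by more than what is already accounted for in the cited statements.
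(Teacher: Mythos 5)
Your proposal is correct and follows essentially the same route as the paper, which simply invokes Theorem 4.3 of \cite{Shen2019} for the continuous case and Theorem 1.1 of \cite{Lu2020} for the $C^r$ case, noting that the latter must be "generalized from $[0,1]^d$ to a compact subset $\mathcal{S}$" — exactly the rescaling-plus-extension step you outline. In fact you supply more detail (the affine pull-back absorbed into the first layer, the McShane/Whitney extensions, and the bookkeeping of the $R_\mathcal{S}$ and $8^r$ factors) than the paper itself, which states the reduction without carrying it out.
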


The estimate \eqref{33_1} directly follows Theorem 4.3 in \cite{Shen2020_3}, and the estimate \eqref{33_2} can be derived from Theorem 1.1 in \cite{Lu2021} by generalizing the regular domain $[0,1]^d$ to a compact subset $\mathcal{S}$.

Note that the error bounds in \eqref{33_1} and \eqref{33_2} suffer from the curse of dimensionality; namely, they exponentially depend on the dimension of the whole space $\mathbb{R}^d$. {However, if we are only interested in the approximation on a low-dimensional submanifold rather than a general compact subset in $\mathbb{R}^d$, stronger results can be adopted. Specifically, we consider a submanifold having certain volume, condition number, and geodesic covering regularity. Note that for manifolds, the definition of volume can be found in \cite{Maeda1978,Boya2003}, and the condition number and geodesic covering regularity are formally defined by Definition 2.1-2.3 in \cite{Baraniuk2009}.} The approximation properties on submanifolds are given as follows.

\begin{proposition}\label{Pro02}
Given $J,K\in\mathbb{N^+}$, $\varepsilon\in(0,1)$, $\delta\in(0,1)$. Let $\mathcal{M}\subset\mathbb{R}^d$ be a compact $d_\mathcal{M}$-dimensional Riemannian submanifold having condition number $\tau_\mathcal{M}^{-1}$, volume $V_\mathcal{M}$, and geodesic covering regularity $G_\mathcal{M}$, and define the $\varepsilon$-neighborhood as
$\mathcal{M}_\varepsilon:=\{\Bx\in\mathbb{R}^d:\underset{\By\in\mathcal{M}}{\inf}\|\Bx-\By\|_2\leq\varepsilon\}$. Suppose $g$ is a function defined in $\mathcal{M}_\varepsilon$,
\begin{enumerate}
  \item if $g\in C(\mathcal{M}_\varepsilon)$, there exists a ReLU FNN $\hat{\phi}$ with width $3^{d_\delta+3}\max\{d_\delta\lfloor J^{1/d_\delta}\rfloor, J+1\}$ and depth $12K+2d_\delta+14$ such that
  \begin{multline}\label{34_1}
|\hat{\phi}(\Bx)-g(\Bx)|\leq 2\omega_f\left(4R_\mathcal{M}\varepsilon((1-\delta)^{-1}\sqrt{d/d_\delta}+1)\right)\\
+19\sqrt{d}\omega_g\left(4R_\mathcal{M}(1-\delta)^{-1}\sqrt{d/d_\delta}J^{-2/d_\delta}K^{-2/d_\delta}\right),\quad\text{for~any}~\Bx\in\mathcal{M}_\varepsilon;
\end{multline}
  \item if $g\in C^r(\mathcal{M}_\varepsilon)$ with $r\in\mathbb{N^+}$, there exists a ReLU FNN $\hat{\phi}$ with width $17r^{d_\delta+1}3^{d_\delta} d_\delta(J+2)\log_2(8J)$ and depth $18r^2(K+2)\log_2(4K)+2d_\delta$ such that
  \begin{multline}\label{34_2}
|\hat{\phi}(\Bx)-g(\Bx)|\leq 8\|g\|_{C^r(\mathcal{M}_\varepsilon)}R_\mathcal{M}\varepsilon((1-\delta)^{-1}\sqrt{d/d_\delta}+1)\\ +170R_\mathcal{M}(r+1)^{d_\delta}8^r(1-\delta)^{-1}\|g\|_{C^r(\mathcal{M}_\varepsilon)}J^{-2r/d_\delta}K^{-2r/d_\delta},\quad\text{for~any}~\Bx\in\mathcal{M}_\varepsilon,
\end{multline}
\end{enumerate}
where $d_\delta:=O\left(d_\mathcal{M}\ln\left(dV_\mathcal{M}G_\mathcal{M}\tau_\mathcal{M}^{-1}/\delta\right)/\delta^2\right)=O\left(d_\mathcal{M}\ln(d/\delta)/\delta^2\right)$ is an integer with $d_\mathcal{M}\leq d_\delta\leq d$.
\end{proposition}

Equation \eqref{34_1} in Proposition \ref{Pro02} is an immediate result of Theorem 1.2 in \cite{Shen2020_3} and Equation \eqref{34_2} can be derived from Theorem 1.1 in \cite{Lu2021} and Theorem 4.4 in \cite{Shen2020_3} similarly. In Proposition \ref{Pro02}, both the error bounds and the ReLU FNN sizes depend on $d_\delta$ instead of $d$ so that the curse of dimensionality is lessened. Note that when $\delta$ is closer to 1, $d_\delta$ is closer to $d_\mathcal{M}$, then the approximation actually occurs in a reduced space with dimension close to $d_\mathcal{M}$ instead of the whole space $\mathbb{R}^d$.

The approximation properties of other FNNs are also studied. For example, the properties of the Floor-ReLU FNN and a special three-hidden-layer FNN can be found in \cite{Shen2020} and \cite{Shen2020_2}, respectively. Also, dimension-independent error bounds of FNNs for the target functions in Barron space are investigated in \cite{Barron1992}. It is also interesting to apply these approximation theories to develop error estimates of dynamics discovery as future work.

\subsection{Network-based Methods for Discovery}\label{Sec_one_trajectory}
Let us review the discovery of dynamics on a single trajectory introduced in Section \ref{Sec_LMM_discovery}. Indeed, the discovery by conventional LMMs is simple to implement, and the solution can be found by merely solving a linear system. However, the governing function $\Bf$ is only computed at prescribed equidistant time steps, and the relation between $\Bf$ and the state $\Bx$ is still unknown. One strategy to overcome this limitation is to approximate each component of $\Bf$ by functions of specific structures such as neural networks, polynomials, splines, etc. The approximate functions can be determined through optimization and will serve as closed-form expressions for $\Bf$. In real applications, once $\Bf$ has been recovered with an explicit expression, the future behavior of the $\Bx$ on the same trajectory can be forecast via solving \eqref{02_1}-\eqref{02_2} with the given initial condition. On the other hand, the behavior of the $\Bx$ on nearby trajectories can also be predicted via solving \eqref{02_1}-\eqref{02_2} with perturbed initial conditions.

Among all structures of approximations, it is popular to employ neural networks in the discovery problems. Especially, when $d$ is moderately large, it is convenient to use neural networks to approximate the governing functions with high-dimensional inputs, which is usually intractable for other structures. Therefore we focus on the network-based methods in this paper. Note that the proposed methods can be easily generalized for other structures of approximations.

We consider the neural network approximation based on the LMM scheme \eqref{09}. Generally, we use $\mathcal{N}_{\hM}$ to denote the set of all neural networks with a specified architecture of a size set $\hM$. For example, $\mathcal{N}_{\hM}$ can be the set of all FNNs with the fixed size $\hM=\{L,W\}$, where $L$ is the depth and $W$ is the width. The notation $\hM\rightarrow\infty$ means that some of the numbers in $\hM$ go to infinity.

Now we introduce a network $\hf_{\hM}(\Bz)\in\mathcal{N}_{\hM}$ to approximate $f(\Bz)$, an arbitrary component of $\Bf(\cdot)$. The neural network method can be developed by replacing $f_{n}$ with $\hf_{\hM}(\Bx_n)$ in \eqref{09}, namely,
\begin{equation}\label{05}
h\underset{m=0}{\overset{M}{\sum}}\beta_m\hf_{\hM}(\Bx_{n-m})=\underset{m=0}{\overset{M}{\sum}}\alpha_mx_{n-m},\quad n=M,M+1,\cdots,N,
\end{equation}
where $\Bx_n$ for $n=0,\cdots,N$ are given sample locations.

Unfortunately, if $\hM$ is too small, the degree of freedom of $\mathcal{N}_{\hM}$ will be less than the number of equations in \eqref{05} and, hence, there is no $\hf_{\hM}\in\mathcal{N}_{\hM}$ such that \eqref{05} is satisfied precisely. Even if $\hM$ is large enough, it is usually intractable to solve \eqref{05} for $\hf_{\hM}$ directly because of the nonlinear parametrization of neural networks. Consequently, in practice, we seek $\hf_{\hM}$ by minimizing the residual of \eqref{05} under a machine learning framework. Namely, we aim to find $\hf_{\hM}\in\mathcal{N}_{\hM}$ such that
\begin{equation}\label{08_1}
J_h(\hf_{\hM})=\underset{u\in\mathcal{N}_{\hM}}{\min}J_h(u),
\end{equation}
where
\begin{equation}\label{08_2}
J_h(u):=\frac{1}{N-M+1}\underset{n=M}{\overset{N}{\sum}}\left|\underset{m=0}{\overset{M}{\sum}}\beta_mu(\Bx_{n-m})-\underset{m=0}{\overset{M}{\sum}}h^{-1}\alpha_mx_{n-m}\right|^2.
\end{equation}

However, similar to the underdetermined linear system \eqref{22} that has infinitely many solutions, there exist infinitely many sets of real numbers $\{y_n\}_{n=s}^{e(N)}$ such that $J_h(u)=0$ providing
\begin{equation}\label{06}
u(\Bx_n)=y_n,\quad\forall n.
\end{equation}
For each set $\{y_n\}_{n=s}^{e(N)}$, if the degree of freedom of $\mathcal{N}_{\hM}$ is large enough, there is always some $u\in\mathcal{N}_{\hM}$ such that \eqref{06} is satisfied due to overfitting. In this situation, $u$ is a global minimizer of $J_h$. Consequently, $J_h$ admits infinitely many global minimizers, all of which lead to $J_h=0$ but take distinct values at $\{\Bx_n\}_{n=s}^{e(N)}$. It implies a minimizer of $J_h$ might be totally different from the target governing function we aim to approximate.

To ensure the uniqueness of the minimizer in the function space at grid points, we introduce auxiliary conditions and build an augmented loss function based on \eqref{08_2}. For example, the initial condition \eqref{20} on the solution network $\hf_{\hM}$ is enforced by solving
\begin{equation}
J_{\ta,h}(\hf_{\hM})=\underset{u\in\mathcal{N}_{\hM}}{\min}J_{\ta,h}(u),\label{23_1}
\end{equation}
where
\begin{equation}
J_{\ta,h}(u):=\frac{1}{t(N)}\left(\underset{n=s}{\overset{s+N_a-1}{\sum}}\left|u(\Bx_n)-\frac{1}{h}\underset{m=0}{\overset{p}{\sum}}\gamma_mx_{n+m}\right|^2+\underset{n=M}{\overset{N}{\sum}}\left|\underset{m=0}{\overset{M}{\sum}}\beta_mu(\Bx_{n-m})-\underset{m=0}{\overset{M}{\sum}}h^{-1}\alpha_mx_{n-m}\right|^2\right).\label{23_2}
\end{equation}
The augmented optimization above guarantees that $\hf_{\hM}(\Bx_n)= \hg_{\hM}(\Bx_n)$ for $n=s,\cdots,e(N)$ providing $J_{\ta,h}(\hf_{\hM})= J_{\ta,h}(\hg_{\hM})= 0$, for any $\hf_{\hM},\hg_{\hM}\in\mathcal{N}_{\hM}$.

Indeed, two networks that are equal at grids $\{\Bx_n\}_{n=s}^{e(N)}$ are not necessarily equal on the whole trajectory $\{\Bx(t):0\leq t\leq T\}$. Fortunately, it is shown for regression problems and partial differential equation problems, deep learning can generalize well \cite{Kawaguchi2017,Mei2018,Mei2019,Luo2020}. This means the closeness of two networks at a dense set of training inputs can lead to their closeness at other nearby inputs. It can be inferred that $\hf_{\hM}(\Bx(t))\approx\hg_{\hM}(\Bx(t))$ for $0\leq t\leq T$ providing $\hf_{\hM}(\Bx_n)= \hg_{\hM}(\Bx_n)$ for $n=s,\cdots,e(N)$ for any $\hf_{\hM},\hg_{\hM}\in\mathcal{N}_{\hM}$ as long as $N$ is moderately large.

\subsection{Implicit Regularization}\label{Sec_implicit_regularization}
We discuss the implicit regularization \cite{Neyshabur2017,Lei2018} of gradient descent in deep learning. For regression problems, if we use over-parameterized FNNs with the standard random initialization, gradient descent can lead to global convergence with a linear convergence rate under certain conditions \cite{Jacot2018,Chen2019,Du2018,Zhu2019}. Similar results also exist in the problems of solving partial differential equations \cite{Luo2020}. Even though the global convergence could be established with over-parametrization, global minimizers are typically not unique. It is interesting to investigate what global minimizers would be identified by gradient descent and how the training process would reduce fitting errors. To answer these questions, it has been shown that, in regression problems, the training of FNN first captures low-frequency components of the target function and then starts to eliminate the high-frequency fitting error \cite{Xu2019,Luo2019}. Similar work about this spectral bias of deep learning is discussed in \cite{Cao2019,pmlr-v97-rahaman19a}. In sum, all the above discussions show that neural networks trained by gradient descent in regression problems have an implicit bias towards smooth functions with low frequencies among all possible neural networks that perfectly fit training data.

Now let us consider the preceding network-based LMM optimization. Note that the loss function \eqref{08_2} without auxiliary conditions and the loss function \eqref{23_2} with auxiliary conditions are formally close to the $\ell^2$ loss in regression problems. Especially, for BDF schemes, $\beta_0=1$ and $\beta_2=\beta_3=\cdots=0$, so the loss functions \eqref{08_2} and \eqref{23_2} are exactly the $\ell^2$ loss. Hence, it is conjectured that the implicit regularization discussed above can also be applied to the LMM optimization. Namely, the gradient descent tends to find a very smooth function among all global minimizers. Consequently, if the target governing function is also smooth enough, the gradient descent is expected to find good approximations either through \eqref{08_1} without auxiliary conditions, or through \eqref{23_1} with auxiliary conditions. Numerical experiments in Section \ref{Sec_numerical} will validate this.

{
However, the implicit regularization may not succeed in the discovery problems with noisy measurement. In a recent work \cite{Xie2019}, a typical example is presented to show the discovery of the Navier-Stokes equation using A-M scheme with $M=1$, where the data is perturbed with Gaussian noise. Similar to the approach discussed in this work, the network is trained through the optimization with implicit regularization. The results show that the discovery is fairly accurate (with errors $O(10^{-2})$) for small noise magnitude ($1\%$), but becomes completely incorrect (with errors $O(10^1)$) if the noise is enlarged to $5\%$. This implies that the network approximation with implicit regularization is sometimes sensitive to the perturbation of the raw data such as noise, especially when the problem is ill-conditioned. Future investigations should be carried out to make further potential improvement for this issue.
}

\subsection{Discovery on a Compact Region}\label{Sec_subsets}
The network-based formulation \eqref{23_1}-\eqref{23_2} is specific for the discovery on a single trajectory from which the data are collected. More generally, we can build similar formulations for the discovery on a connected compact region, from which a set of trajectories can be sampled, to recover the whole vector field in this region.

Suppose $\Bx(t;\tilde{\Bx}_0)$ is the solution of \eqref{02_1} with initial value $\tilde{\Bx}_0$. Let $\Gamma$ be a compact subset in $\mathbb{R}^d$, then $\Omega:=\{\Bx(t;\tilde{\Bx}):0\leq t\leq T,\tilde{\Bx}\in\Gamma\}\subset\mathbb{R}^d$ is a compact region filled with all trajectories starting from $\Gamma$ with time period $0\leq t\leq T$. In practice, suppose we are given a dataset $\{\Bx_{n,n'}=\Bx(t_n;\tilde{\Bx}_{n'})\}_{n=0,\cdots,N;n'=1,\cdots,N'}$, where $\{\tilde{\Bx}_{n'}\}_{n'=1,\cdots,N'}$ is a set of points densely distributed in $\Gamma$, and suppose $\Omega$ is densely covered by $\{\Bx_{n,n'}\}$. We aim to use neural networks to approximate the governing function in the whole subset $\Omega$.

Note that \eqref{23_2} is a loss function with respect to one trajectory. For multiple trajectories, we can build a similar loss function by summing up all individual loss functions with respect to each trajectory. Specifically, let $\hf_{\hM}$ be a network that approximates a certain component of the governing function, then we can determine $\hf_{\hM}$ by
\begin{equation}
J_{\ta,h,\text{multi}}(\hf_{\hM})=\underset{u\in\mathcal{N}_{\hM}}{\min}J_{\ta,h,\text{multi}}(u),\label{29_1}
\end{equation}
where
\begin{multline}
J_{\ta,h,\text{multi}}(u):=\frac{1}{N't(N)}\underset{n'=1}{\overset{N'}{\sum}}\\
\cdot\left(\underset{n=s}{\overset{s+N_a-1}{\sum}}\left|u(\Bx_{n,n'})-\frac{1}{h}\underset{m=0}{\overset{p}{\sum}}\gamma_mx_{i+m,n'}\right|^2+\underset{n=M}{\overset{N}{\sum}}\left|\underset{m=0}{\overset{M}{\sum}}\beta_mu(\Bx_{n-m,n'})-\underset{m=0}{\overset{M}{\sum}}h^{-1}\alpha_mx_{n-m,n'}\right|^2\right).\label{29_2}
\end{multline}

Similar to the discovery on a single trajectory, the optimization \eqref{29_1}-\eqref{29_2} for multiple trajectories will be also effective without auxiliary conditions due to the implicit regularization.

\section{Convergence Analysis}\label{Sec_theory}
In this section, we consider the convergence of the preceding network-based dynamics discovery using LMMs, namely, the convergence from the global minimizer of the optimization to the exact governing function $f$ as $\hM\rightarrow\infty$ and $h\rightarrow0$. The optimization with auxiliary initial conditions is taken as a special case for analysis. For the optimization with other auxiliary conditions, a similar argument can be applied.

\subsection{Error Estimates on a Trajectory}
We consider the error estimation of the discovery on the specific trajectory $\mathcal{T}:=\{\Bx(t):0\leq t\leq T\}$.
For least-square optimization, people are usually interested in the $\ell^2$-type error estimation. Therefore, let us introduce the $\ell^2$ seminorm $|g|_{2,h}:=\left((N+1)^{-1}\sum_{n=0}^{N}|g(\Bx_n)|^2\right)^{1/2}$, for all $g\in C(\mathcal{T})$ with a given $h>0$. Note that $|\cdot|_{2,h}$ is not a norm in $C(\mathcal{T})$ since $|g|_{2,h}=0$ does not imply $g=0$ in $C(\mathcal{T})$. However, $|\cdot|_{2,h}$ acts as a norm in the space of all grid functions merely defined on $\{\Bx_n\}_{n=0}^N$ (see \cite{Keller2019}).

As discussed above, for a specific LMM, some states in $\{\Bx_n\}_{n=0}^N$ may not be involved in the scheme. For fairness, we study the convergence at all involved states $\{\Bx_n\}_{i=s}^{e(N)}$. Therefore, we rewrite $|\cdot|_{2,h}$ as the LMM-related seminorm $|g|_{2,h}=\left(t(N)^{-1}\sum_{n=s}^{e(N)}|g(\Bx_n)|^2\right)^{1/2}$, for all $g\in C(\mathcal{T})$.

Without ambiguity, we use the notation $|\cdot|_{2,h}$ for all LMMs afterwards. If we write $\{g(\Bx_n)\}_{n=s}^{e(N)}$ as a vector
$\vec{\Bg}:=\left[g(\Bx_s)\quad g(\Bx_{s+1})\quad\cdots\quad g(\Bx_{e(N)})\right]^T$, then it follows $|g|_{2,h}=(t(N))^{-1/2}\|\vec{\Bg}\|_2$,
where $\|\cdot\|_2$ is the Euclidean norm of a column vector.

First, let us reformulate the optimization \eqref{23_1}-\eqref{23_2} with an abstract admissible set, say, $J_{\ta,h}(\hf_{\mathcal{A},h})=\underset{u\in\mathcal{A}}{\min}J_{\ta,h}(u)$, where $J_{\ta,h}(u)$ is defined in \eqref{23_2} and $\mathcal{A}$ is a general nonempty set of functions. We aim to estimate the distance between $\hf_{\mathcal{A},h}$ and $f$.

For a given LMM, recall that $\BB_h$ defined in \eqref{16} is constructed by lining up the LMM coefficients into rows and $\BA_h$ is defined in \eqref{21}. We denote the 2-condition number of $\BA_h$ by $\kappa_2(\BA_h)=\|\BA_h\|_2\|\BA_h^{-1}\|_2$. The estimation is described as follows.

\begin{theorem}\label{Thm00}
In the dynamical system \eqref{02_1}, suppose $\Bx\in C^\infty([0,T])^d$ and $\Bf$ is defined in $\mathcal{T}'$, a small neighborhood of $\mathcal{T}$. Let $f$ be an arbitrary component of $\Bf$. Also, let $N>0$ be an integer and $h:=T/N$, then we have
\begin{equation}
\left|\hf_{\mathcal{A},h}-f\right|_{2,h}<C\kappa_2(\BA_h)\left(h^p+e_\mathcal{A}\right),
\end{equation}
where $\hf_{\mathcal{A},h}\in\mathcal{A}$ is a global minimizer of $J_{\ta,h}$ defined by \eqref{23_2} corresponding to an LMM with order $p$; $e_\mathcal{A}$ satisfies $e_\mathcal{A}>\underset{u\in\mathcal{A}}{\inf}\underset{\Bx\in\mathcal{T}'}{\sup}|u(\Bx)-f(\Bx)|$; $C$ is a constant independent of $h$ and $\mathcal{A}$.
\end{theorem}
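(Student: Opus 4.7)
The plan is to recast everything as a linear system and then use a standard perturbation argument. Writing $\vec{\Bf}_{\mathrm{true}} := [f(\Bx_s),\dots,f(\Bx_{e(N)})]^T$ and observing that the augmented loss decomposes as the residual of the linear system \eqref{21}, we have
\begin{equation*}
J_{\ta,h}(u) = \frac{1}{t(N)}\left\|\BA_h\vec{\Bu} - \left[\begin{array}{c}\Bc_h\\\vec{\Bq}_h\end{array}\right]\right\|_2^2
\end{equation*}
for any $u\in C(\mathcal{T}')$, where $\vec{\Bu}$ collects the nodal values of $u$ at $\{\Bx_n\}_{n=s}^{e(N)}$. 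Substituting the true values yields
\begin{equation*}
\BA_h\vec{\Bf}_{\mathrm{true}} = \left[\begin{array}{c}\Bc_h\\\vec{\Bq}_h\end{array}\right] + \vec{\Bepsilon},
\end{equation*}
where the top $N_a$ entries of $\vec{\Bepsilon}$ are the one-sided FDM errors controlled by \eqref{25}, and the bottom $N-M+1$ entries are the local truncation errors $-\Btau_{h,n}$ of the LMM; both are componentwise $O(h^p)$, hence $\|\vec{\Bepsilon}\|_2 = O(\sqrt{t(N)}\,h^p)$.

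Next I would upper-bound $J_{\ta,h}(\hf_{\mathcal{A},h})$ by evaluating the loss at a good approximant. Pick $u^*\in\mathcal{A}$ with $\sup_{\Bx\in\mathcal{T}'}|u^*(\Bx)-f(\Bx)| < e_\mathcal{A}$; setting $\vec{\Br} := \vec{u^*} - \vec{\Bf}_{\mathrm{true}}$ gives $\|\vec{\Br}\|_2 \le \sqrt{t(N)}\,e_\mathcal{A}$ and
\begin{equation*}
\sqrt{t(N)\,J_{\ta,h}(u^*)} = \|\BA_h\vec{\Br} + \vec{\Bepsilon}\|_2 \le \|\BA_h\|_2\sqrt{t(N)}\,e_\mathcal{A} + O(\sqrt{t(N)}\,h^p).
\end{equation*}
The minimality of $\hf_{\mathcal{A},h}$ then transfers this bound to its own residual.

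For the final step, I would use the identity
\begin{equation*}
\vec{\Bf}_{\mathcal{A},h} - \vec{\Bf}_{\mathrm{true}} = \BA_h^{-1}\!\left(\BA_h\vec{\Bf}_{\mathcal{A},h} - \left[\begin{array}{c}\Bc_h\\\vec{\Bq}_h\end{array}\right]\right) - \BA_h^{-1}\vec{\Bepsilon},
\end{equation*}
pass to $\ell^2$ norms, and divide by $\sqrt{t(N)}$ to convert back to $|\cdot|_{2,h}$. This produces a bound of the form $\|\BA_h^{-1}\|_2\,O(h^p) + \kappa_2(\BA_h)\,e_\mathcal{A}$. Since the top block of $\BA_h$ contains the identity $\BI_{N_a}$, we have $\|\BA_h\|_2\ge 1$, so $\|\BA_h^{-1}\|_2\le\kappa_2(\BA_h)$ and both pieces can be absorbed into $C\kappa_2(\BA_h)(h^p+e_\mathcal{A})$.

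The only delicate point is keeping the $\sqrt{t(N)}$ factors consistent across the two sums in $J_{\ta,h}$ and the two blocks of $\BA_h$; a mismatch there would produce a spurious $\sqrt{t(N)}$ in the final bound. The consistency is guaranteed because $t(N) = N_a + (N-M+1)$ equals the total number of rows of $\BA_h$, and the $\ell^\infty$ truncation bound for both the FDM initialization and the LMM is of the same order $h^p$, which is exactly what makes the two contributions combine cleanly into the single $O(h^p)$ term.
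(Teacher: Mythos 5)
Your proposal is correct and follows essentially the same route as the paper's proof: recast $J_{\ta,h}$ as the normalized squared residual of the augmented system \eqref{21}, bound the residual of the minimizer by that of a near-best approximant $u^*\in\mathcal{A}$, and invert $\BA_h$ to pass from residual to error, with the FDM and LMM consistency errors supplying the $O(h^p)$ term. Your explicit observation that $\|\BA_h\|_2\geq 1$ (from the $\BI_{N_a}$ block), so that $\|\BA_h^{-1}\|_2\leq\kappa_2(\BA_h)$, is a detail the paper leaves implicit but uses in its final inequality.
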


\begin{proof}
Given $h>0$, similar to \eqref{24}, we can define the component-wise local truncation error by
$\tau_{h,n}:=h^{-1}\sum_{m=0}^{M}\alpha_m x_{n-m}-\sum_{m=0}^{M}\beta_m f(\Bx(t_{n-m}))$.
Then by denoting
\begin{equation}\label{41}
\vec{\Btau}_h:=\left[\tau_{h,M}\quad \tau_{h,M+1}\quad\cdots\quad\tau_{h,N}\right]^T,\quad\vec{\Bf}:=\left[f(\Bx_s)\quad f(\Bx_{s+1})\quad\cdots\quad f(\Bx_{e(N)})\right]^T,
\end{equation}
we have $\vec{\Btau}_h = \vec{\Bq}_h-\BB_h\vec{\Bf}$. By the hypothesis that the LMM has order $p$, there exists some $C_1>0$ independent of $h$ such that
\begin{equation}\label{26}
\|\vec{\Btau}_h\|_2\leq(N-M+1)^\frac{1}{2}\|\vec{\Btau}_h\|_\infty < C_1(N-M+1)^\frac{1}{2}h^p.
\end{equation}

On the other hand, since $e_\mathcal{A}>\underset{u\in\mathcal{A}}{\inf}\underset{\Bx\in\mathcal{T}'}{\sup}|u(\Bx)-f(\Bx)|$, there exists a function $v\in\mathcal{A}$ such that
\begin{equation}\label{10}
|v(\Bx)-f(\Bx)|\leq e_\mathcal{A},\quad\forall\Bx\in\mathcal{T}'.
\end{equation}

Also, write
$\Be_h=\Bc_h-\left[\BI_{N_a}~\BO\right]\vec{\Bf}$, where $\Bc_h$ is defined in \eqref{eqn:ch}. Then by \eqref{25}, there exists some constant $C_2$ independent of $h$ such that
\begin{equation}\label{27}
\|\Be_h\|_2\leq N_a^\frac{1}{2}\|\Be_h\|_\infty < C_2N_a^\frac{1}{2}h^p.
\end{equation}

Moreover, we introduce the notation  $\vec{\hBf}_{\mathcal{A},h}=\left[\hf_{\mathcal{A},h}(\Bx_s)\quad\hf_{\mathcal{A},h}(\Bx_{s+1})\quad\cdots\quad\hf_{\mathcal{A},h}(\Bx_{e(N)})\right]^T$ and $\vec{\Bv}:=\left[v(\Bx_s)\quad v(\Bx_{s+1})\quad\cdots\quad v(\Bx_{e(N)})\right]^T$. Then by \eqref{21}, we immediately have
\begin{equation}
J_{\ta,h}(\hf_{\mathcal{A},h})=\frac{1}{t(N)}\left\|\BA_h\vec{\hBf}_{\mathcal{A},h}-\left[\begin{array}{c}\Bc_h\\\vec{\Bq}_h\end{array}\right]\right\|_2^2=\frac{1}{t(N)}\left\|\BA_h\left(\vec{\hBf}_{\mathcal{A},h}-\vec{\Bf}\right)-\left[\begin{array}{c}\Be_h\\\vec{\Btau}_h\end{array}\right]\right\|_2^2.
\end{equation}

Since $\hf_{\mathcal{A},h}\in\mathcal{N}_{\hM}$ is a global minimizer of $J_{\ta,h}$, it satisfies $J_{\ta,h}(\hf_{\mathcal{A},h})\leq J_{\ta,h}(v)$, namely,
\begin{equation}
\frac{1}{t(N)}\left\|\BA_h\left(\vec{\hBf}_{\mathcal{A},h}-\vec{\Bf}\right)-\left[\begin{array}{c}\Be_h\\\vec{\Btau}_h\end{array}\right]\right\|_2^2\leq\frac{1}{t(N)}\left\|\BA_h\left(\vec{\Bv}-\vec{\Bf}\right)-\left[\begin{array}{c}\Be_h\\\vec{\Btau}_h\end{array}\right]\right\|_2^2,
\end{equation}
which implies
\begin{equation}
\left\|\BA_h\left(\vec{\hBf}_{\mathcal{A},h}-\vec{\Bf}\right)\right\|_2-\left\|\left[\begin{array}{c}\Be_h\\\vec{\Btau}_h\end{array}\right]\right\|_2\leq\left\|\BA_h\left(\vec{\Bv}-\vec{\Bf}\right)\right\|_2+\left\|\left[\begin{array}{c}\Be_h\\\vec{\Btau}_h\end{array}\right]\right\|_2.
\end{equation}
Therefore,
\begin{multline}\label{13}
\left\|\vec{\hBf}_{\mathcal{A},h}-\vec{\Bf}\right\|_2\leq\|\BA_h^{-1}\|_2\left\|\BA_h\left(\vec{\hBf}_{\mathcal{A},h}-\vec{\Bf}\right)\right\|_2
\leq\|\BA_h^{-1}\|_2\left(\left\|\BA_h\left(\vec{\Bv}-\vec{\Bf}\right)\right\|_2+2\left\|\left[\begin{array}{c}\Be_h\\\vec{\Btau}_h\end{array}\right]\right\|_2\right).
\end{multline}
As a consequence, by \eqref{26}, \eqref{10}, \eqref{27}, and \eqref{13}, it follows that
\begin{align*}
\left|\hf_{\mathcal{A},h}-f\right|_{2,h} & =t(N)^{-\frac{1}{2}}\left\|\vec{\hBf}_{\mathcal{A},h}-\vec{\Bf}\right\|_2\\
& \leq t(N)^{-\frac{1}{2}}\Big[\|\BA_h\|_2\|\BA_h^{-1}\|_2\|\vec{\Bv}-\vec{\Bf}\|_2+2\|\BA_h^{-1}\|_2\left(\|\Be_h\|_2^2+\|\vec{\Btau}_h\|_2^2\right)^\frac{1}{2}\Big]\\
&\leq t(N)^{-\frac{1}{2}}\Big[t(N)^\frac{1}{2}\|\BA_h\|_2\|\BA_h^{-1}\|_2\cdot e_\mathcal{A} +2\|\BA_h^{-1}\|_2\left(C_2^2N_a+C_1^2(N-M+1)\right)^\frac{1}{2}h^p\Big]\\
&\leq C\kappa_2(\BA_h)\left(h^p+e_\mathcal{A}\right).
\end{align*}
with $C$ independent of $h$ and $\mathcal{A}$, which completes the proof.
\end{proof}

The error estimate given in Theorem \ref{Thm00} is general for any types of the admissible set $\mathcal{A}$. Specifically, we propose the error estimate of the discovery using neural networks. Note that  $\mathcal{T}$ is a one-dimensional Riemannian submanifold, combining Theorem \ref{Thm00} and Proposition \ref{Pro02} directly leads to the following result.

\begin{theorem}\label{Thm01}
Under the notations and hypothesis of Theorem \ref{Thm00}, for any $J,K\in\mathbb{N^+}$ and $\delta\in(0,1)$, it satisfies:
\begin{enumerate}
\item If $f\in C(\mathcal{T}')$ and $\mathcal{N}_{\hM}$ consists of all ReLU FNNs with width $3^{d_\delta+3}\max\{d_\delta\lfloor J^{1/d_\delta}\rfloor,J+1\}$ and depth $12K+2d_\delta+14$,
\begin{equation}\label{11}
\left|\hf_{\hM,h}-f\right|_{2,h}<C\kappa_2(\BA_h)\left(h^p+e_\text{NN}(J,K)\right)
\end{equation}
with $e_\text{NN}(J,K)=\sqrt{d}\omega_f\left(4R_\mathcal{T}(1-\delta)^{-1}\sqrt{d/d_\delta}J^{-2/d_\delta}K^{-2/d_\delta}\right)$;

\item If $f\in C^r(\mathcal{T}')$ with $r\in\mathbb{N^+}$ and $\mathcal{N}_{\hM}$ consists of all ReLU FNNs with width $17r^{d_\delta+1}3^{d_\delta} d_\delta(J+2)\log_2(8J)$ and depth $18r^2(K+2)\log_2(4K)+2d_\delta$, then \eqref{11} still holds with $e_\text{NN}(J,K)=R_\mathcal{T}(r+1)^{d_\delta}8^r(1-\delta)^{-1}\|f\|_{C^r(\mathcal{T}')}J^{-2r/d_\delta}K^{-2r/d_\delta}$,
\end{enumerate}
where $d_\delta=O\left(\ln(d/\delta)/\delta^2\right)$ is an integer such that $1\leq d_\delta\leq d$; $R_\mathcal{T}$ is defined by \eqref{12}; $\omega_f(\cdot)$ is defined by \eqref{14}; $\hf_{\hM,h}\in\mathcal{N}_{\hM}$ is a global minimizer of $J_{\ta,h}$ defined by \eqref{23_2} corresponding to an LMM with order $p$; $C$ is a constant independent of $h$, $J$, $K$, $d$ and $d_\delta$. In particular, if $\kappa_2(\BA_h)$ is uniformly bounded for all $h>0$, then
\begin{equation}\label{19}
\underset{J,K\rightarrow\infty,h\rightarrow0}{lim}\left|\hf_{\hM,h}-f\right|_{2,h}=0.
\end{equation}
\end{theorem}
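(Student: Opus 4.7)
The strategy is to obtain Theorem~\ref{Thm01} as a direct specialization of Theorem~\ref{Thm00} with $\mathcal{A}=\mathcal{N}_{\hM}$, using Proposition~\ref{Pro02} to supply the network approximation error $e_\mathcal{A}$. The crucial geometric observation is that the trajectory $\mathcal{T}=\{\Bx(t):0\leq t\leq T\}$ is the image of the smooth curve $\Bx\in C^\infty([0,T])^d$ and hence a compact one-dimensional Riemannian submanifold of $\mathbb{R}^d$, so Proposition~\ref{Pro02} applies with $d_\mathcal{M}=1$, yielding the effective dimension $d_\delta=O(\ln(d/\delta)/\delta^2)$ quoted in the theorem, in particular $1\leq d_\delta\leq d$.

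The first step is to fix $\varepsilon>0$ small enough that the tubular neighborhood $\mathcal{M}_\varepsilon$ of $\mathcal{T}$ is contained in the open neighborhood $\mathcal{T}'$ on which $f$ is defined; this is possible by compactness of $\mathcal{T}$. For any given $J,K$, I then invoke part~1 of Proposition~\ref{Pro02} when $f\in C(\mathcal{T}')$ and part~2 when $f\in C^r(\mathcal{T}')$, each time producing a ReLU FNN $\hat\phi\in\mathcal{N}_{\hM}$ whose width and depth precisely match those stated in Theorem~\ref{Thm01} and which achieves the corresponding uniform error bound on $\mathcal{M}_\varepsilon\supset\mathcal{T}\supset\mathcal{T}'$. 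The bound from Proposition~\ref{Pro02} splits into an $\varepsilon$-dependent term and a $(J,K)$-decaying term; since the network sizes do not depend on $\varepsilon$, I shrink $\varepsilon$ at the outset so that the static term is absorbed into the constant $C$, leaving only the $(J,K)$-decaying contribution, which matches exactly the $e_\text{NN}(J,K)$ written in the statement up to absolute constants. Inserting this quantity as $e_\mathcal{A}$ in Theorem~\ref{Thm00} produces \eqref{11}.

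The limit \eqref{19} then follows by inspection: $h^p\to 0$ as $h\to 0$, while $e_\text{NN}(J,K)\to 0$ as $J,K\to\infty$, using continuity of $\omega_f$ at zero in the continuous case and the explicit power decay in the $C^r$ case; uniform boundedness of $\kappa_2(\BA_h)$ then forces the right-hand side of \eqref{11} to vanish. I expect the main subtle point to be the clean handling of the $\varepsilon$-term in Proposition~\ref{Pro02}: one must verify that a single choice of $\varepsilon$, depending only on $\mathcal{T}'$ (and not on $h$, $J$, $K$), both places $\mathcal{M}_\varepsilon$ inside $\mathcal{T}'$ and renders the static error contribution harmless, so that the resulting constant $C$ is genuinely independent of all the parameters listed in the theorem. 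Once this reduction is made, the rest is bookkeeping: matching the network sizes to those in Proposition~\ref{Pro02}, checking that $\mathcal{T}$ is covered by $\mathcal{M}_\varepsilon$ so that the $|\cdot|_{2,h}$ seminorm at grid points is controlled by the uniform bound on $\mathcal{T}_\varepsilon$, and collecting constants.
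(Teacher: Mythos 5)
Your proposal is correct and follows essentially the same route as the paper, which simply observes that $\mathcal{T}$ is a compact one-dimensional Riemannian submanifold and combines Theorem \ref{Thm00} (with $\mathcal{A}=\mathcal{N}_{\hM}$) and Proposition \ref{Pro02} directly. Your explicit treatment of the $\varepsilon$-neighborhood term — shrinking $\varepsilon$ (which is legitimate since the network sizes in Proposition \ref{Pro02} do not depend on $\varepsilon$) so that $\mathcal{M}_\varepsilon\subset\mathcal{T}'$ and the static term is dominated by the $(J,K)$-decaying one — is a detail the paper leaves implicit, and it is handled appropriately.
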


\begin{remark}
If $J$ and $K$ are large enough, the error bound $e_\text{NN}(J,K)$ will be overwhelmed by $h^p$. This means the LMM truncation error will dominate the network approximation error if the network size is large enough. In this situation, $\left|\hf_{\hM,h}-f\right|_{2,h}$ will decay to zero with the rate $O(h^p)$. Namely, the convergence rate has the same order as the LMM scheme.
\end{remark}

Similarly, we can develop the $\ell^2$ error estimate for the discovery on multiple trajectories \eqref{29_1}-\eqref{29_2}. It suffices to use preceding results to get an error inequality for each trajectory and take the mean square of them. Specifically, we define $|g|_{2,h,\text{multi}}=\left((t(N)N')^{-1}\sum_{n'=1}^{N'}\sum_{n=s}^{e(N)}|g(\Bx_{n,n'})|^2\right)^{1/2}$, for all $g\in C(\Omega)$,
then under the hypothesis of Theorem \ref{Thm00}, it satisfies
\begin{equation}
\left|\hf_{\hM,h}-f\right|_{2,h,\text{multi}}<C\kappa_2(\BA_h)\left(h^p+e_\mathcal{A}\right),
\end{equation}
where $\hf_{\mathcal{A},h}\in\mathcal{A}$ is a global minimizer of $J_{\ta,h}$ defined by \eqref{29_2} corresponding to an LMM with order $p$, and $e_\mathcal{A}$ is any real number such that $e_\mathcal{A}>\underset{u\in\mathcal{A}}{\inf}\underset{\Bx\in\Omega}{\sup}|u(\Bx)-f(\Bx)|$.

In particular, we can derive the error estimates for the discovery using ReLU FNNs if the governing function is either continuous or $C^r$ smooth from Proposition \ref{Pro01}. Similar arguments apply to other types of neural networks or other structures of approximations.

\subsection{Uniform Boundedness of $\kappa_2(\BA_h)$}
Next, we discuss the estimation of $\kappa_2(\BA_h)$. This is a special case, corresponding to the $|\cdot|_{2,h}$ norm, of the discussion on the stability
of LMM for dynamics discovery made in \cite{Keller2019}. Here, for completeness, we provide an alternative approach to derive a conclusion that is the same as that shown in \cite{Keller2019}. First, we introduce the following lemma (\cite{Amodio1996}),
\begin{lemma}\label{Lem01}
Given the following triangular Toeplitz band matrix
\begin{equation}\label{18}
\BT_N=\left[
\begin{array}{ccccc}
  c_0 &  &  &  &  \\
  \vdots & \ddots &  &  &  \\
  c_M & \ddots & \ddots &  &  \\
   & \ddots & \ddots & \ddots &  \\
   & & c_M & \cdots & c_0
\end{array}\right]\in\mathbb{R}^{N\times N}
\end{equation}
with $c_0\neq0$, we define the associated polynomial by $p(z)=\sum_{i=0}^{M}c_iz^{M-i}$. If all roots of $p(z)$ have modulus smaller than 1, then $\kappa_2(\BT_N)$ is uniformly bounded, i.e. $\kappa_2(\BT_N)<C$ for some $C$ independent of $N$.
\end{lemma}

Then we have the following theorem to determine the uniform boundedness of $\kappa_2(\BA_h)$,
\begin{theorem}\label{Thm03}
Let $\BA_h$ be the matrix defined by \eqref{28}, and $p_h(z)$ be the following polynomial
\begin{equation}\label{39}
p_h(z)=\sum_{i=N-e(N)}^{M-s}\beta_iz^{M-s-i}.
\end{equation}
If all roots of $p_h(z)$ have modulus smaller than 1, then $\kappa_2(\BA_h)$ is uniformly bounded with respect to $N$.
\end{theorem}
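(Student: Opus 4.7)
The plan is to exploit the lower-triangular block structure of $\BA_h$ to reduce the task of bounding $\kappa_2(\BA_h)$ to bounding the condition number of its lower-right block, which turns out to be a triangular Toeplitz band matrix to which Lemma \ref{Lem01} applies directly.

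Concretely, I would first split $\BA_h$ into the $2\times 2$ block form
$$\BA_h = \begin{bmatrix} \BI_{N_a} & \BO \\ \BB_{h,1} & \BB_{h,2} \end{bmatrix},$$
where $\BB_{h,1}\in\mathbb{R}^{(N-M+1)\times N_a}$ consists of the first $N_a$ columns of $\BB_h$ and $\BB_{h,2}\in\mathbb{R}^{(N-M+1)\times(N-M+1)}$ of the remaining columns. A direct index calculation from \eqref{16} shows that $\BB_{h,2}$ is itself a lower-triangular Toeplitz band matrix of the form \eqref{18}, with diagonal entry $c_0=\beta_{N-e(N)}\neq 0$ (by the very definition of $e(N)$) and the subsequent subdiagonals given by $c_i=\beta_{N-e(N)+i}$ for $i=1,\ldots,N_a$. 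Substituting these coefficients into \eqref{17} and using the identity $M-s-N_a=N-e(N)$ produces precisely the polynomial $p_h(z)$ in \eqref{39}. Lemma \ref{Lem01} combined with the root hypothesis of Theorem \ref{Thm03} then gives a uniform-in-$N$ bound on $\kappa_2(\BB_{h,2})$.

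Next I would pass from $\kappa_2(\BB_{h,2})$ to $\kappa_2(\BA_h)$ via the block inverse
$$\BA_h^{-1} = \begin{bmatrix} \BI_{N_a} & \BO \\ -\BB_{h,2}^{-1}\BB_{h,1} & \BB_{h,2}^{-1} \end{bmatrix}.$$
The key ancillary observation is that $N_a=t(N)-(N-M+1)$ is bounded independently of $N$ for all the standard LMM families (e.g., $N_a=M-1$ for A-B, $M$ for A-M, $0$ for BDF). Consequently both $\BA_h$ and $\BB_{h,1}$ have uniformly bounded row and column sums, and hence uniformly bounded spectral norm via $\|M\|_2\leq(\|M\|_1\|M\|_\infty)^{1/2}$. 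Moreover $\|\BB_{h,2}\|_2\geq|\beta_{N-e(N)}|>0$ (a diagonal entry), so the bound on $\kappa_2(\BB_{h,2})$ transfers to a bound on $\|\BB_{h,2}^{-1}\|_2$, and the block inverse formula then supplies a uniform bound on $\|\BA_h^{-1}\|_2$.

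The main obstacle is purely notational: carefully verifying in the second step that the rightmost $(N-M+1)$ columns of $\BB_h$ do assemble into a genuine triangular Toeplitz band matrix whose associated polynomial matches \eqref{39}. This amounts to tracking how the banded diagonal indexing of \eqref{16} shifts under a column partition, and checking the index identity $M-s-N_a=N-e(N)$ so that $c_0$ lines up with $\beta_{N-e(N)}$. Once this identification is in place, everything else is routine norm bookkeeping, because the ``extra'' pieces $\BB_{h,1}$ and the identity block have at least one dimension (namely $N_a$) bounded independently of $N$.
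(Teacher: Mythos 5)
Your proposal is correct and follows essentially the same route as the paper: the identical $2\times 2$ block splitting of $\BA_h$ with identity block $\BI_{N_a}$, application of Lemma \ref{Lem01} to the lower-right triangular Toeplitz block $\BB_{h,2}$ (whose associated polynomial is $p_h$ via the index identity $M-s-N_a=N-e(N)$), and the block-inverse formula to transfer the bounds to $\|\BA_h\|_2$ and $\|\BA_h^{-1}\|_2$. Your added details—the explicit verification that the polynomial of $\BB_{h,2}$ matches \eqref{39}, and the lower bound $\|\BB_{h,2}\|_2\geq|\beta_{N-e(N)}|$ used to extract $\|\BB_{h,2}^{-1}\|_2$ from $\kappa_2(\BB_{h,2})$—are points the paper leaves implicit, but the argument is the same.
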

\begin{proof}
Rewrite $\BA_h$ as $2\times2$ blocks
\begin{equation}
\BA_h=\left[\begin{array}{cc}
                \BI_{N_a} & \BO\\
                \BB_{h,1} & \BB_{h,2}
              \end{array}
\right],\text{~where~}
\BB_{h,1}=\left[\begin{array}{ccc}
              \beta_{M-s} & \cdots & \beta_{N-e(N)+1} \\
               & \ddots & \vdots \\
               &  & \beta_{M-s} \\
               & \cdots &
            \end{array}
\right]\in\mathbb{R}^{(N-M+1)\times N_a},
\end{equation}
and
\begin{equation}
\BB_{h,2}=\left[
\begin{array}{ccccc}
  \beta_{N-e(N)} &  &  &  &  \\
  \vdots & \ddots &  &  &  \\
  \beta_{M-s} & \ddots & \ddots &  &  \\
   & \ddots & \ddots & \ddots &  \\
   & & \beta_{M-s} & \cdots & \beta_{N-e(N)}
\end{array}\right]\in\mathbb{R}^{(N-M+1)\times (N-M+1)}.
\end{equation}

Clearly, $\|\BB_{h,1}\|_2$ only depends on the LMM scheme and independent of $N$. By Lemma \ref{Lem01}, both $\|\BB_{h,2}\|_2$ and $\|\BB_{h,2}^{-1}\|_2$ are uniformly bounded with respect to $N$. Therefore, the proof directly follows
\begin{multline}
\|\BA_h\|_2=\underset{\|\Bx\|_2=1}{\max}\|\BA_h\Bx\|_2=\underset{\|\Bx\|_2=1}{\max}\left\|\left[\begin{array}{cc}
                \BI_{N_a} & \BO\\
                \BB_{h,1} & \BB_{h,2}
              \end{array}
\right]\left[\begin{array}{c}
                \Bx_1 \\
                \Bx_2
              \end{array}
\right]\right\|_2\\
=\underset{\|\Bx\|_2=1}{\max}\left(\|\Bx_1\|_2^2+\|\BB_{h,1}\Bx_1+\BB_{h,2}\Bx_2\|_2^2\right)^\frac{1}{2}\leq \underset{\|\Bx\|_2=1}{\max}\left(\|\Bx_1\|_2^2+\left(\|\BB_{h,1}\|_2\|\Bx_1\|_2+\|\BB_{h,2}\|_2\|\Bx_2\|_2\right)^2\right)^\frac{1}{2}\\
\leq\left(1+\left(\|\BB_{h,1}\|_2+\|\BB_{h,2}\|_2\right)^2\right)^\frac{1}{2},
\end{multline}
and
\begin{multline}
\|\BA_h^{-1}\|_2=\underset{\|\Bx\|_2=1}{\max}\|\BA_h^{-1}\Bx\|_2=\underset{\|\Bx\|_2=1}{\max}\left\|\left[\begin{array}{cc}
                \BI_{N_a} & \BO\\
                -\BB_{h,2}^{-1}\BB_{h,1} & \BB_{h,2}^{-1}
              \end{array}
\right]\left[\begin{array}{c}
                \Bx_1 \\
                \Bx_2
              \end{array}
\right]\right\|_2\\
=\underset{\|\Bx\|_2=1}{\max}\left(\|\Bx_1\|_2^2+\|-\BB_{h,2}^{-1}\BB_{h,1}\Bx_1+\BB_{h,2}^{-1}\Bx_2\|_2^2\right)^\frac{1}{2}\\
\leq\underset{\|\Bx\|_2=1}{\max}\left(\|\Bx_1\|_2^2+\|\BB_{h,2}^{-1}\|_2^2\left(\|\Bx_2\|_2+\|\BB_{h,1}\|_2\|\Bx_1\|_2\right)^2\right)^\frac{1}{2}\leq\left(1+\|\BB_{h,2}^{-1}\|_2^2\left(1+\|\BB_{h,1}\|_2\right)^2\right)^\frac{1}{2}.
\end{multline}
\end{proof}

\begin{remark}
For BDF schemes, $\beta_1=\cdots=\beta_M=0$, and the corresponding $\BB_{h,2}$ is a diagonal matrix with diagonals $\beta_0$. So $\BA_h$ is always uniformly bounded for each $M\in\mathbb{N}$. This means the network-based dynamics discovery with BDF schemes for all $M\in\mathbb{N}$ is convergent in the sense of \eqref{19}.
\end{remark}

\begin{remark}
For A-B schemes, $\BB_{h,2}$ is diagonal if $M=1$. Also, it is verified for $2\leq M\leq6$, all the roots of the associated polynomial $p_h(z)$ have
modulus smaller than 1 (\cite{Keller2019}). Hence, by Theorem \ref{Thm03}, $\BA_h$ is uniformly bounded for $1\leq m\leq6$. This means the network-based dynamics discovery with A-B schemes for $1\leq M\leq6$ is convergent in the sense of \eqref{19}.
\end{remark}

\begin{remark}\label{rmk01}
For A-M schemes with $M\geq2$, it was proven in \cite{Keller2019} that all the roots of the associated polynomial $p_h(z)$ have a modulus greater than 1. In these cases, $\kappa_2(\BA_h)$ increases exponentially with respect to $N$, and hence the error bounds in Theorem \ref{Thm01} also increases exponentially. This means we have no guarantee of their convergence in theory. In spite of this, it is still possible to obtain convergent solutions as $h\rightarrow0$ in practice (see Section \ref{Sec_case1_AM} and Appendix \ref{Sec_appendix})
\end{remark}

\begin{remark}
Note that \cite{Keller2019} considered stability under norms other than $|\cdot|_{2,h}$  as well, which also allowed the discussion of convergence for A-B family for which there are roots on the unit disc. In particular, it was shown that A-M scheme is marginally stable for $M=1$,
(see the definition in \cite{Keller2019}) but remains convergent. Actually, in this case, $\kappa_2(\BA_h)$ increases linearly with respect to $N=T/h$. If the network size is large enough such that the network approximation error is dominated by $O(h^p)$, the error bounds in Theorem \ref{Thm01} will be $C\cdot \frac{T}{h}\cdot h^p=O(h)$ since $p=2$. This means A-M scheme with $M=1$ is convergent with order 1. Moreover, Theorem \ref{Thm01} can be modified for norms other than $|\cdot|_{2,h}$ and condition number other than $\kappa_2$, resulting in various error bounds with special orders.
\end{remark}

\section{Numerical Experiments}\label{Sec_numerical}
In this section, several examples are provided to show the performance of dynamics discovery via deep learning in practical computation. We aim to compute the errors of various LMMs, estimate the orders of accuracy and compare them with the theoretical ones.

In the first, second and third examples, we conduct the discovery on a single trajectory $\mathcal{T}$ described in Section \ref{Sec_one_trajectory}, in which we define the following relative $\ell^2$ error
\begin{equation}\label{31}
e_{\hf}=\left(d^{-1}\sum_{j=1}^d\left(\sum_{n=s}^{e(N)}\left|\hf_j(\Bx_n)-f_j(\Bx_n)\right|^2\right)/\left(\sum_{n=s}^{e(N)}|f_j(\Bx_n)|^2\right)\right)^{1/2},
\end{equation}
where $f_n$ for $n=1,\cdots,d$ are components of the original governing function $\Bf$, and $\hf_n$ is the network approximating $f_n$. Note that $\{\Bx_n\}_{n=s}^{e(N)}$ are exactly the grid points involved in the loss function, the error defined by \eqref{31} is actually an empirical error. For the deep learning, we name \eqref{31} as the training error or grid error. On the other hand, we are also interested in the generalization performance of the network approximation. So we also define the relative $\ell^2$ error at testing points as

\begin{equation}\label{32}
\tilde{e}_{\hf}=\left(d^{-1}\sum_{j=1}^d\frac{\int_{\mathcal{T}}|\hf_j-f_j|^2\text{d}s}{\int_{\mathcal{T}}|f_j|^2\text{d}s}\right)^{1/2}=\left(d^{-1}\sum_{j=1}^d\frac{\int_0^T|\hf_j(\Bx(t))-f_j(\Bx(t))|^2\cdot\|\Bf(\Bx(t))\|_2\text{d}t}{\int_0^T|f_j(\Bx(t))|^2\cdot\|\Bf(\Bx(t))\|_2\text{d}t}\right)^{1/2},
\end{equation}
where the integral over $\mathcal{T}$ is evaluated by Gauss quadrature. For the deep learning, we name \eqref{32} as the testing error. Both \eqref{31} and \eqref{32} are taken as metrics for evaluation.

In the fourth example, we conduct the discovery on a compact region $\Omega$ described in Section \ref{Sec_subsets}. Similarly, we define the following training error
\begin{equation}
e_{\hf}=\left(d^{-1}\sum_{j=1}^d\frac{\sum_{n'=1}^{N'}\sum_{n=s}^{e(N)}\left|\hf_j(\Bx_{n,n'})-f_j(\Bx_{n,n'})\right|^2}{\sum_{n'=1}^{N'}\sum_{n=s}^{e(N)}|f_j(\Bx_{n,n'})|^2}\right)^{1/2},
\end{equation}
and testing error $\tilde{e}_{\hf}=\left(d^{-1}\sum_{j=1}^d\left(\int_\Omega|\hf_j-f_j|^2\text{d}\Bx\right)/\left(\int_\Omega|f_j|^2\text{d}\Bx\right)\right)^{1/2}$, where the integral over $\Omega$ is evaluated by Monte Carlo method.

The overall setting in all experiments is summarized as follows.
\begin{itemize}
  \item \textbf{Environment}
  The experiments are performed in Python 3.8 environment. We utilize PyTorch library for neural network implementation and CUDA 11.0 toolkit for GPU-based parallel computing. All examples are implemented on a desktop.
  \item \textbf{Optimizer and hyper-parameters}
  The network-based optimization is solved by {\em adam} subroutine from PyTorch library. This subroutine implements the Adam algorithm in \cite{Kingma2014}. For all examples, the number of epochs $N_\text{I}$ is set as $3\times10^4$, and use batch gradient descent. The learning rate in the $n$-th epoch, denoted as $\delta_n$, is set to decay exponentially with linearly decreasing powers from $10^{-2}$ to $10^{-4}$, namely, $\delta_n=10^{-2-2n/N_\text{I}}$.
  \item \textbf{Network setting}
   The FNN with ReLU activation is taken for approximation, whose weights and biases are initialized via uniform distributions ${\bm W}_l, {\bm b}_l\sim U(-\sqrt{W_{l-1}},\sqrt{W_{l-1}})$.
  \item \textbf{Generation of data}
   In the first example, the state data are generated directly by the explicit expression. In the second and third examples, no expression for the state is available. Hence we generate the state data by solving the dynamical system via the solver {\tt ode45} in Matlab with tiny tolerances ({\tt RelTol}$=10^{-13}$, {\tt AbsTol}$=10^{-13}$).
\end{itemize}

In the numerical implementation, the overall error is not only affected by the LMM discretization error and the network approximation error, but also by the optimization performance. In neural network optimization, it is usually difficult to find global minimizers numerically due to non-convexity. There is no existing optimizer that can guarantee to identify a global minimizer to the best of our knowledge. The optimization error is the difference between the actually identified neural network and the neural network associated with an arbitrary global minimizer. Consequently, for LMMs with uniformly bounded $\kappa_2(\BA)$, the overall error between the numerical solution and the target governing function consists of the LMM discretization error $O(h^p)$, the network approximation error determined by the network size, and the optimization error. We will validate and quantify the optimization error in our tests later.

\subsection{Problem with Accurate Data}\label{Sec_case1}
Let us consider the following model problem
\begin{equation}\label{case1}
\begin{cases}\dot{x_1}=x_2,~\dot{x_2}=-x_1,~\dot{x_3}=1/x_2^2,\quad t\in[0,1]\\\left[x_1,x_2,x_3\right]_{t=0}=[0,1,0],\end{cases},
\end{equation}
whose state can be explicitly given by $x_1=\sin(t)$, $x_2=\cos(t)$, $x_3=\tan(t)$. Thanks to the explicit expressions, we can directly take the accurate time-series $\{x_1(t_n),x_2(t_n),x_3(t_n)\}_{n=1}^N$ for the test, and no error is brought to the data. Under this setting, the error on numerical solutions are only caused by the method. In this experiment, we focus on the deep learning discovery with auxiliary initial conditions \eqref{23_1}-\eqref{23_2}.

\subsubsection{Network Size Test}
Note that Theorem \ref{Thm01} implies $e_{\hf}\sim O(h^p)$ as $h\rightarrow0$, as long as the network is sufficiently deep and wide. However, in practice, the desired depth and width are usually unknown. So we first perform the discovery with networks of various sizes to find a decent network that is both effective in approximation and cheap in computation. Specifically, we use depth $L=2$, $3$, $\cdots$, $6$, width $W=10$, $20$, $\cdots$, $2560$, and $h=10^{-3}$. The BDF-6 scheme is employed in this test. Therefore, the local truncation error is up to $O(h^6) = O(10^{-18})$, which is smaller than machine precision. Consequently, numerical errors in this case are mainly caused by network approximation (i.e., the difference of the network associated with a global minimizer of \eqref{08_1} and \eqref{23_1} and the target function) and network optimization (i.e., the difference of the networks associated with a local minimizer and a global minimizer of \eqref{08_1} and \eqref{23_1}). In Figure \ref{Fig_case1_errors_L_m}, $e_{\hf}$ and $\tilde{e}_{\hf}$ versus $W$ for various $L$ are presented. It is observed that both errors decrease quickly as $W$ increases. On the other hand, the network with $L=5$ and $W=2560$ obtains the minimal error. We can also observe that for $L=5$, the error decay becomes very slow after $W=640$. Consequently, we choose the network with $L=5$ and $W=640$ for all tests afterward, since the computation when $W=640$ is not expensive and the overall error cannot be improved significantly furthermore.

\begin{figure}
\begin{minipage}[t]{0.47\linewidth}
\centering
\subfloat[$e_{\hf}$ v.s. $W$]{
\includegraphics[scale=0.28]{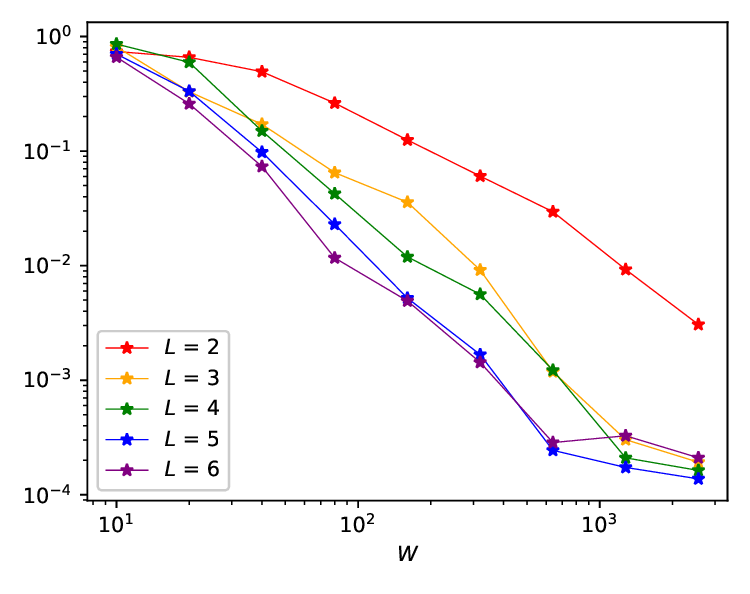}}
\subfloat[$\tilde{e}_{\hf}$ v.s. $W$]{
\includegraphics[scale=0.28]{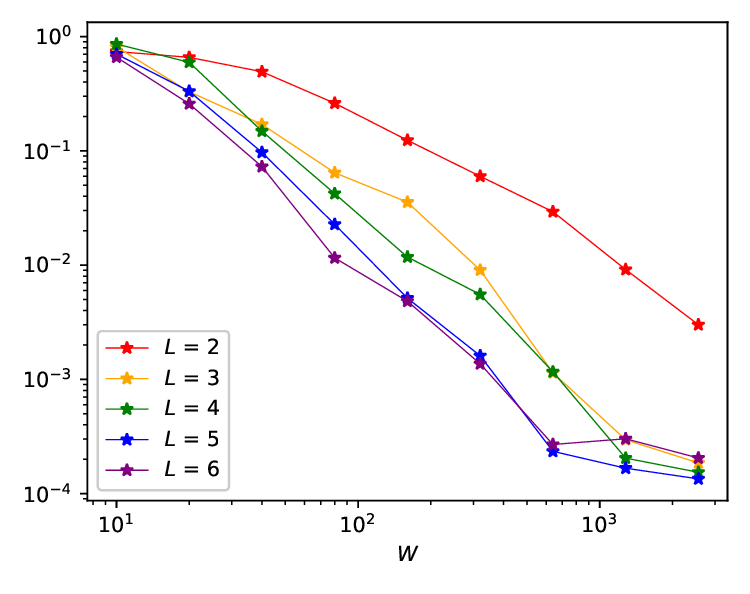}}
\caption{\em Training error $e_{\hf}$ and testing error $\tilde{e}_{\hf}$ versus $W$ of the model problem \eqref{case1}.}
\label{Fig_case1_errors_L_m}
\end{minipage}
\hfill
\begin{minipage}[t]{0.47\linewidth}
\centering
\subfloat[$e_{\hf}$ v.s. $h$]{
\includegraphics[scale=0.28]{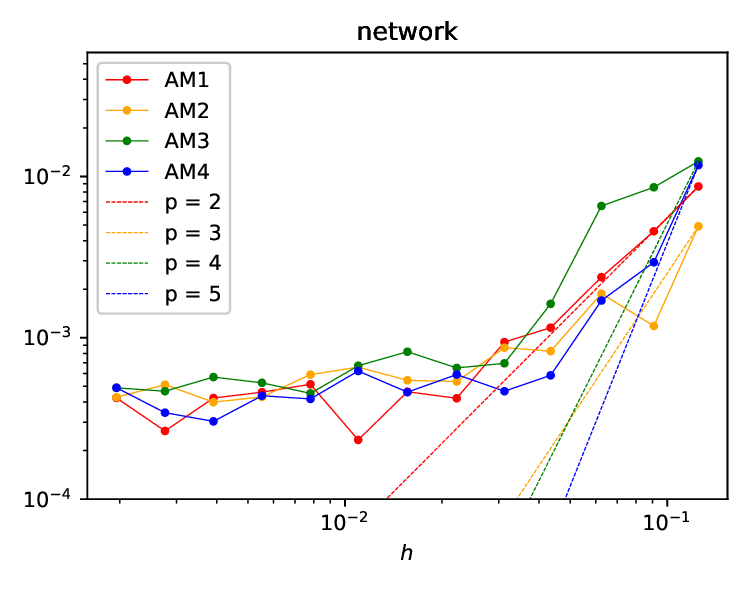}}
\subfloat[$\tilde{e}_{\hf}$ v.s. $h$]{
\includegraphics[scale=0.28]{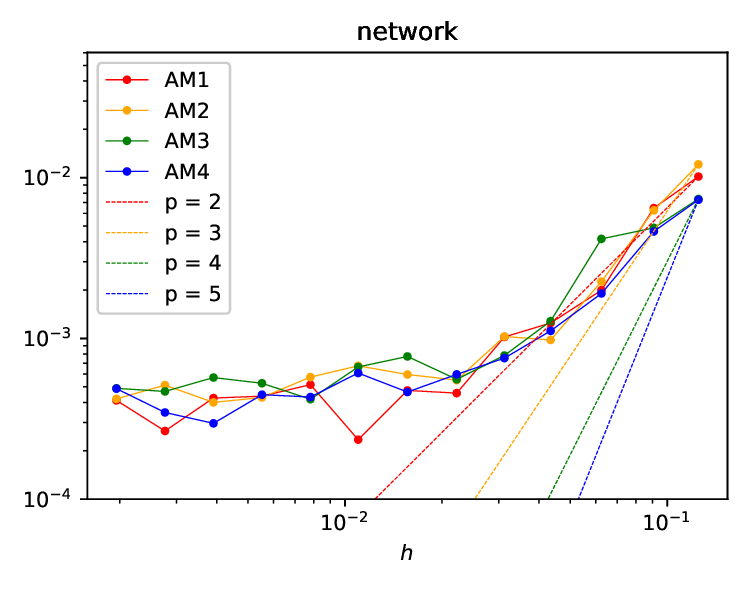}}
\caption{\em Training error $e_{\hf}$ and testing error $\tilde{e}_{\hf}$ versus $h$ via network-based A-M schemes of the model problem \eqref{case1}.}
\label{Fig_case1_errors_h_method_AM}
\end{minipage}
\end{figure}

\subsubsection{Quantification of Optimization Errors}
A special test is conducted to estimate the optimization errors. First, we set up three ReLU FNNs with $L=5$ and $W=640$, denoted as $\hat{f}^*_1$, $\hat{f}^*_2$, $\hat{f}^*_3$, and use them to fit the three components of the governing functions in \eqref{case1}, respectively. We use a standard least-square regression in this fitting. Next, we consider the dynamical system with the governing function being these FNNs, namely,
\begin{equation}\label{case1_GN}
\begin{cases}\dot{x_1}=\hat{f}^*_1,~\dot{x_2}=\hat{f}^*_2,~\dot{x_3}=\hat{f}^*_3,\quad t\in[0,1]\\\left[x_1,x_2,x_3\right]_{t=0}=[0,1,0],\end{cases}.
\end{equation}
We still use ReLU FNNs with $L=5$ and $W=640$ to do discovery on \eqref{case1_GN}. Under this setting, the approximate networks have the same architecture as the target governing function, which implies the approximation error is automatically zero. Moreover, same as the preceding test, we take BDF-6 scheme with $h=10^{-3}$, whose LMM discretization error is negligible. Therefore the obtained error should be dominated by the optimization error.

Finally, we obtain the training error $e_{\hf}=3.451\times10^{-4}$ and testing error $\tilde{e}_{\hf}=3.443\times10^{-4}$, which reflects the optimization error caused by the current optimizer is around $O(10^{-4})$. This quantification indicates that there exists an error bottleneck around $O(10^{-4})$ preventing the overall error from being reduced below it.

\subsubsection{Convergence Rate with Respect to $h$}\label{Sec_case1_AB/BDF}
Next, we test the convergence rate of the deep learning discovery by varying $h$ and using various LMM schemes. Recall the overall error consists of the LMM discretization error, the network approximation error, and the optimization error. To conduct appropriate tests on the convergence order in $h$, the network approximation error and the optimization error should be well controlled such that the LMM discretization error is the dominant error. For this purpose, we will conduct a series of tests to empirically identify a threshold $h^*>0$ such that the LMM discretization error is dominating the overall error when $h>h^*$. When $h<h^*$, although decreasing $h$ would still reduce the overall error, it is difficult to observe the order of $O(h^p)$ since, for example, the optimization error may be dominant.

Specifically, we assign $h=2^{-3},\cdots,2^{-9}$, fix the network width $W=640$, and test A-B and BDF ($M=1,\cdots,4$) schemes, both of which are proved to have uniformly bounded matrices $\BA_h$. The log-log error decay versus $h$ for each scheme is presented in Figure \ref{Fig_case1_errors_h_method}. Recall the theoretical results in Section \ref{Sec_theory} imply that the training error of the $M$-step scheme should converge to zero with order $M$. According to Figure \ref{Fig_case1_errors_h_method}, there indeed exist some empirical threshold $h^*>0$ for each scheme. It is shown in Figure \ref{Fig_case1_errors_h_method} (a) that when $h>h^*$, deep learning-based LMMs can effectively discover the governing function on training sample points with error orders close to the theoretical ones. And it is shown in Figure \ref{Fig_case1_errors_h_method} (b) that deep learning-based LMMs also have good generalization performance similar to the training error on sample points.

We would like to double-check that $W=640$ is an appropriate size and the approximation error is small enough for the convergence rate test with respect to $h$; that is, the training errors are indeed dominated by $O(h^p)$ when $h>h^*$, in which case the log-log error curves appear as straight line segments. For different $M$ and $h^*$, we repeat the preceding test using width $W=1280$ and present the new training errors in Table \ref{Tab02}. Table \ref{Tab02} shows that using $W=1280$ can not even get smaller errors in most cases, which excludes the possibility that the network approximation error is dominant. Note that decreasing $h$ can reduce errors with an expected order as long as $h>h^*$, which excludes the possibility that the optimization error is dominant. Therefore, these numerical results show that the training errors are dominated by the LMM discretization when $h>h^*$, which is a suitable range of $h$ for a convergence test.

\subsubsection{Convergence of A-M schemes}\label{Sec_case1_AM}

Moreover, we perform a test using A-M schemes with $1\leq M\leq 4$ . Although no theoretical analysis is made on the convergence of A-M schemes with $M\geq2$ (see Remark \ref{rmk01}), it is intriguing to investigate how the A-M schemes perform in practice. First, we conduct the network-based discovery with A-M schemes under the same framework as in Section \ref{Sec_case1_AB/BDF}. The training and testing errors versus $h$ are shown in Figure \ref{Fig_case1_errors_h_method_AM}. It is observed that both errors decrease as $h$ decreases, though the errors decrease more slowly when $h$ is smaller due to the optimization errors.

This result indicates that the network-based LMM with unstable schemes can still work effectively, obtaining solutions with small errors if $h$ is small enough. However, comparative tests in Appendix \ref{Sec_appendix} show that with unstable LMM schemes, using other approximations (e.g., grid functions and polynomials) are less robust, whose results are highly sensitive to the used solvers and their settings. This comparison implies that the network approximation is advantageous over other approximations in overcoming the ill-conditioning of the unstable schemes.

{Despite obtaining errors up to $O(10^{-3})$ in this test, A-M schemes are not recommended to users in practical problems. Indeed, the observed convergence rates are clearly lower than the theoretical ones, and it shows no improvement when using larger $M$. Instead, stable schemes such as A-B or BDF are more manageable in the convergence rates and not more expensive in the computational cost.}

\begin{figure}
\centering
\subfloat[$e_{\hf}$ v.s. $h$]{
\includegraphics[scale=0.3]{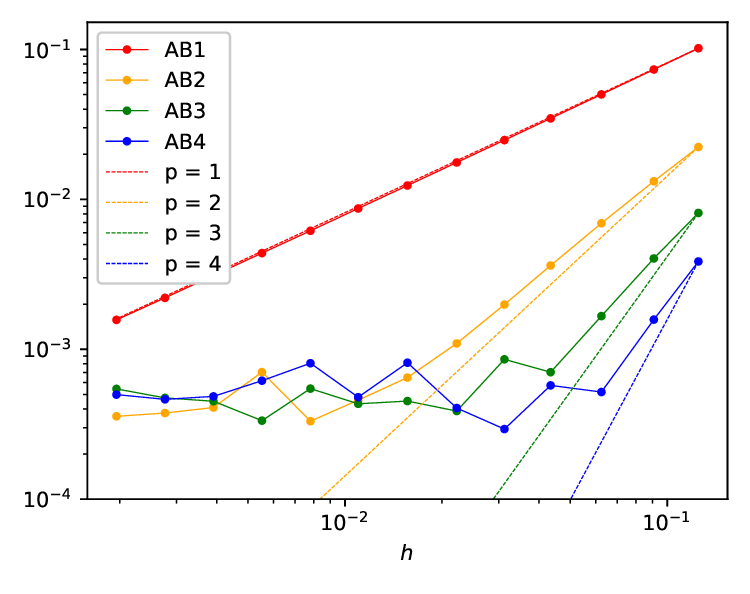}
\includegraphics[scale=0.3]{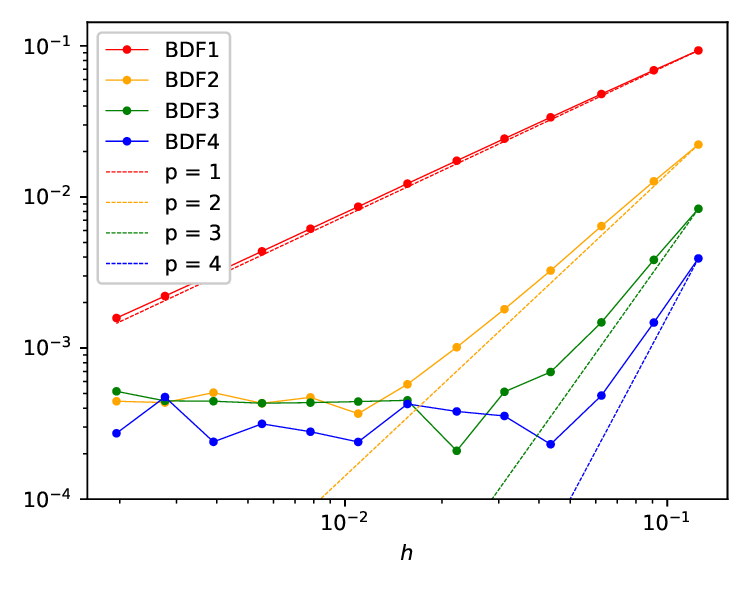}}
\subfloat[$\tilde{e}_{\hf}$ v.s. $h$]{
\includegraphics[scale=0.3]{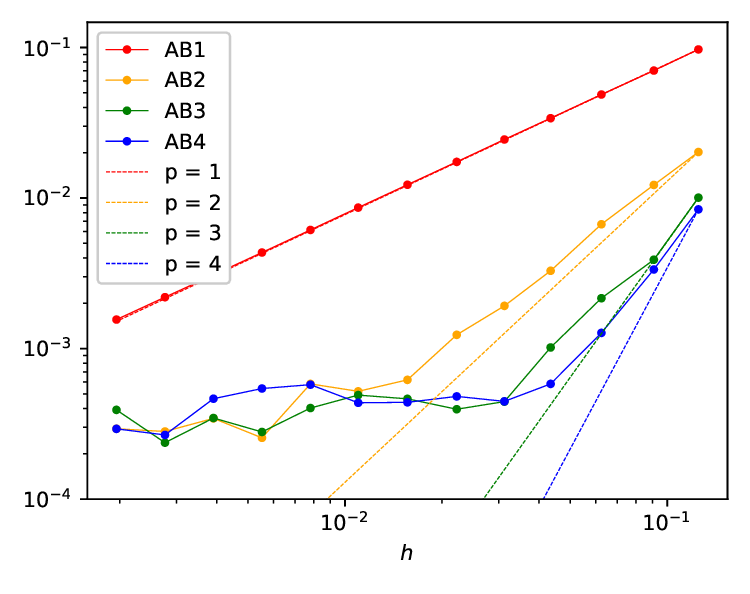}
\includegraphics[scale=0.3]{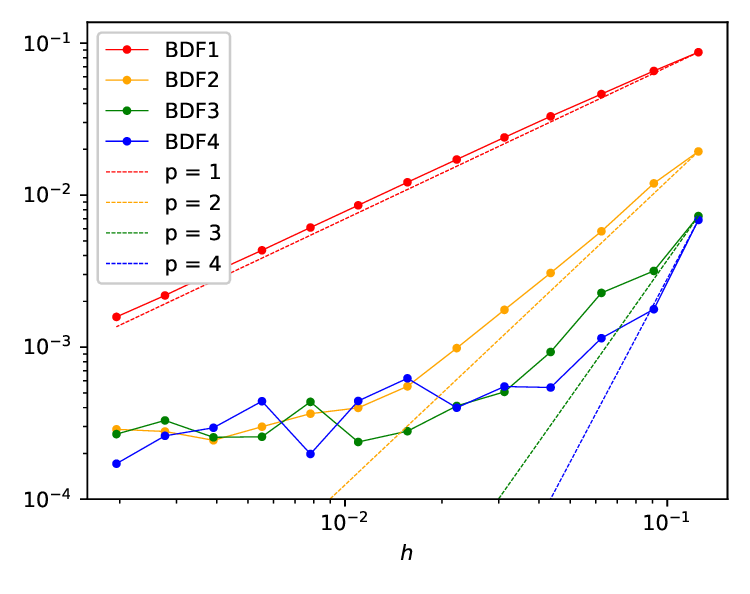}}\\
\caption{\em Training error $e_{\hf}$ and testing error $\tilde{e}_{\hf}$ versus $h$ via network-based A-B/BDF schemes of the model problem \eqref{case1}.}
\label{Fig_case1_errors_h_method}
\end{figure}

\begin{table}
\centering
\begin{tabular}{|c|c|c|c|c|c|c|c|}
  \hline
   \multicolumn{2}{|c|}{Schemes} & \multicolumn{3}{c|}{A-B} & \multicolumn{3}{c|}{BDF}  \\\hline
   $M,h^*$ &$h$ & $e_{\hf,W=640}$ & $e_{\hf,W=1280}$ & Diff & $e_{\hf,W=640}$ & $e_{\hf,W=1280}$ & Diff \\\hline
   \multirow{4}{*}{$\begin{array}{c}M=1 \\h^*=1/512 \end{array}$} &$1/8$ & 1.019e-01 & 1.019e-01 & 6.755e-10 & 9.330e-02 & 9.987e-02 & -6.576e-03 \\
   &$1/32$ & 2.485e-02 & 2.485e-02 & -8.722e-07 & 2.431e-02 & 2.471e-02 & -3.971e-04 \\
   &$1/128$ & 6.173e-03 & 6.179e-03 & -5.386e-06 & 6.139e-03 & 6.142e-03 & -3.145e-06 \\
   &$1/512$ & 1.561e-03 & 1.595e-03 & -3.420e-05 & 1.582e-03 & 1.598e-03 & -1.544e-05 \\\hline
   \multirow{4}{*}{$\begin{array}{c}M=2 \\h^*=1/64 \end{array}$} &$1/8$ & 2.234e-02 & 2.234e-02 & -1.937e-12 & 2.222e-02 & 2.222e-02 & 1.128e-10 \\
   &$1/16$ & 6.928e-03 & 6.930e-03 & -1.946e-06 & 6.405e-03 & 1.433e-02 & -7.928e-03 \\
   &$1/32$ & 1.987e-03 & 2.045e-03 & -5.865e-05 & 1.746e-03 & 1.877e-03 & -1.304e-04 \\
   &$1/64$ & 6.110e-04 & 8.278e-04 & -2.167e-04 & 5.380e-04 & 1.298e-03 & -7.602e-04 \\\hline
   \multirow{4}{*}{$\begin{array}{c}M=3 \\h^*=1/23 \end{array}$} &$1/8$ & 8.117e-03 & 8.117e-03 & -1.388e-17 & 8.354e-03 & 8.354e-03 & 1.105e-08 \\
   &$1/11$ & 4.035e-03 & 4.036e-03 & -1.016e-06 & 3.836e-03 & 3.840e-03 & -4.602e-06 \\
   &$1/16$ & 1.665e-03 & 1.744e-03 & -7.959e-05 & 1.471e-03 & 1.950e-03 & -4.786e-04 \\
   &$1/23$ & 6.735e-04 & 7.855e-04 & -1.120e-04 & 6.531e-04 & 8.099e-03 & -7.446e-03 \\\hline
   \multirow{3}{*}{$\begin{array}{c}M=4 \\h^*=1/16 \end{array}$} &$1/8$ & 3.852e-03 & 3.949e-03 & -9.610e-05 & 3.920e-03 & 3.920e-03 & 1.996e-14 \\
   &$1/11$ & 1.577e-03 & 1.577e-03 & -4.042e-09 & 1.472e-03 & 1.470e-03 & 2.552e-06 \\
   &$1/16$ & 5.179e-04 & 6.443e-04 & -1.263e-04 & 5.413e-04 & 1.375e-03 & -8.338e-04 \\\hline
\end{tabular}
\caption{\em  Training errors $e_{\hf}$ with $W=640$ and $1280$ for various $h$'s using A-B and BDF schemes of the model problem \eqref{case1}. $h^*$ denotes the step size threshold above which the error decreases as $O(h^p)$ approximately. ``Diff" denotes $e_{\hf,W=640}-e_{\hf,W=1280}$.}\label{Tab02}
\end{table}

\subsubsection{Variability test}\label{Sec_case1_variability}
{
Finally, we conduct a variability test by repeating the experiments with randomness. Note that the randomness of our algorithm only comes from the initialization of neural networks. In this test, the A-B, BDF and A-M schemes for various $M$ and $h$ are implemented repeatedly on the model problem \eqref{case1} using 10 different random seeds, and we compute the average errors and their standard deviations of these trials. Selected results for the training errors are presented in Table \ref{Tab03}. It is clear that most of the average errors dominate their standard deviations, and in some cases they have the same magnitude (e.g., $(M,h)=(4,1/32)$). Same results apply to the testing errors. Therefore the computed errors with any random seeds are kept in the same magnitude with high probability. Consequently, our algorithm is numerically stable under the random initialization, and hence all experiments and conclusions are reliable.

\begin{table}
\centering
\begin{tabular}{|c|c|c|c|c|c|c|}
  \hline
   Schemes & \multicolumn{2}{c|}{A-B} & \multicolumn{2}{c|}{BDF} & \multicolumn{2}{c|}{A-M} \\\hline
    $(M,h)$ & $e_{\hf}$ & SD & $e_{\hf}$ & SD & $e_{\hf}$ & SD \\\hline
    $(1,1/8)$ & 1.019e-01 & 2.789e-09 & 9.330e-02 & 5.626e-09 & 9.007e-03 & 3.433e-04 \\\hline
    $(1,1/16)$ & 5.032e-02 & 2.381e-04 & 4.801e-02 & 2.707e-07 & 2.258e-03 & 7.404e-05 \\\hline
    $(1,1/32)$ & 2.489e-02 & 5.792e-05 & 2.432e-02 & 6.625e-07 & 1.262e-03 & 8.316e-04 \\\hline
    $(1,1/64)$ & 1.237e-02 & 3.607e-06 & 1.224e-02 & 1.376e-05 & 3.968e-04 & 3.892e-05 \\\hline
    $(1,1/128)$ & 6.175e-03 & 3.502e-06 & 6.145e-03 & 4.452e-06 & 4.099e-04 & 9.328e-05 \\\hline
    $(4,1/8)$ & 3.852e-03 & 2.084e-09 & 3.920e-03 & 3.610e-12 & 1.059e-02 & 1.919e-03 \\\hline
    $(4,1/16)$ & 5.183e-04 & 1.146e-06 & 4.742e-04 & 1.757e-05 & 3.492e-03 & 1.472e-03 \\\hline
    $(4,1/32)$ & 1.923e-03 & 1.691e-03 & 6.707e-04 & 5.927e-04 & 9.407e-04 & 5.669e-04 \\\hline
    $(4,1/64)$ & 9.154e-04 & 5.227e-04 & 3.283e-04 & 5.869e-05 & 6.474e-04 & 4.152e-04 \\\hline
    $(4,1/128)$ & 4.678e-04 & 9.506e-05 & 2.736e-04 & 4.330e-05 & 3.945e-04 & 1.279e-04 \\\hline
\end{tabular}
\caption{\em  Average training errors $e_{\hf}$ and standard deviations (SDs) of 10 trials with different random seeds. (Used network size: $L=5$, $W=640$)}\label{Tab03}
\end{table}
}

\subsection{Lorenz System}\label{Sec_case2}
Let us consider the 3-D Lorenz system which characterizes the chaotic dynamics for certain initial conditions and has a number of important applications including weather forecasting. The system is formulated as
\begin{equation}\label{case2}
\dot{x_1}=10(x_2-x_1),~\dot{x_2}=x_1(28-x_3)-x_2,~\dot{x_3}=x_1x_2-8x_3/3,\quad t\in[0,T],
\end{equation}

\subsubsection{Convergence Rate with Respect to $h$}
We continue testing the convergence rate with respect to $h$ of the dynamics discovery via deep learning. As in the previous convergence test, the test is only valid when $h$ is larger than a threshold $h^*$ when the LMM discretization error is dominating the overall error. For simplicity, we only empirically choose $W=640$ since this width is large enough for the previous test. Specifically, we consider the long time behavior of the system \eqref{case2} by setting $T=25$ and taking initial values $\left[x_1,x_2,x_3\right]_{t=0}=[-8,7,27]$. We assign $h=0.04$, $0.02$, $\cdots$, $0.0025$ and take A-B ($M=1,\cdots,4$) and BDF ($M=1,\cdots,4$) schemes. First, we conduct the optimization with initial conditions \eqref{23_1}-\eqref{23_2}. The error decay versus $h$ is demonstrated in Figure \ref{Fig_case2_errors_h_method}. The dynamics of the true governing function and the approximate neural network obtained by A-B ($M=4$, $h=0.0025$) are also presented in Figure \ref{Fig_case2_solution}, from which we observe that deep learning can identify the chaotic dynamics on training samples effectively.

As discussed in Section \ref{Sec_implicit_regularization}, it is conjectured that auxiliary conditions may not be necessary to guarantee a correct solution because the implicit regularization has a bias towards the smoothest solution. To validate this fact, we conduct a comparative test by solving the optimization \eqref{08_1}-\eqref{08_2} with or without auxiliary conditions (ACs) in the loss function of the problem in \eqref{case2}. We take the same parameters as in the preceding test and visualize the error decay versus $h$ for A-B schemes in Figure \ref{Fig_case2_errors_h_method}. We visualize the training error and loss versus training iterations in Figure \ref{Fig_case2_learning_information}. {It is clear that when $h=0.02$, the error of the A-B ($M=2$) scheme without auxiliary conditions is larger than the one with initial auxiliary conditions. The difference is also significant for A-B scheme ($M=4$) with $h\leq0.01$. The comparison shows that the approach with auxiliary conditions is more accurate, although both approaches work effectively overall.} Due to the non-uniqueness of networks approximately minimizing the loss function, networks with and without the auxiliary conditions can both reduce the loss functions well as shown by Figure \ref{Fig_case2_learning_information} (c). However, reducing the loss function well does not imply the corresponding network converges to the right target function. When $h$ is large, though the implicit regularization of deep learning can provide a smooth solution without auxiliary conditions, this solution may not be our target function and, hence, the error $e_{\hf}$ on the training grid points and the error $\tilde{e}_{\hf}$ on random grid points would be large as shown in Figure \ref{Fig_case2_learning_information} (a) and (b) (left). When $h$ is small, a larger number of training samples makes the loss function better restrict its local minimizers closer to the desired solution and, hence, both $e_{\hf}$ and $\tilde{e}_{\hf}$ becomes reasonable. The auxiliary conditions can
better eliminate spurious local minimizers of the loss function and, hence, both $e_{\hf}$ and $\tilde{e}_{\hf}$ are reasonably small no matter $h$ is large or small as shown in Figure \ref{Fig_case2_learning_information} (a) and (b) (right).

\begin{figure}
\centering
\subfloat[$e_{\hf}$ v.s. $h$]{
\includegraphics[scale=0.3]{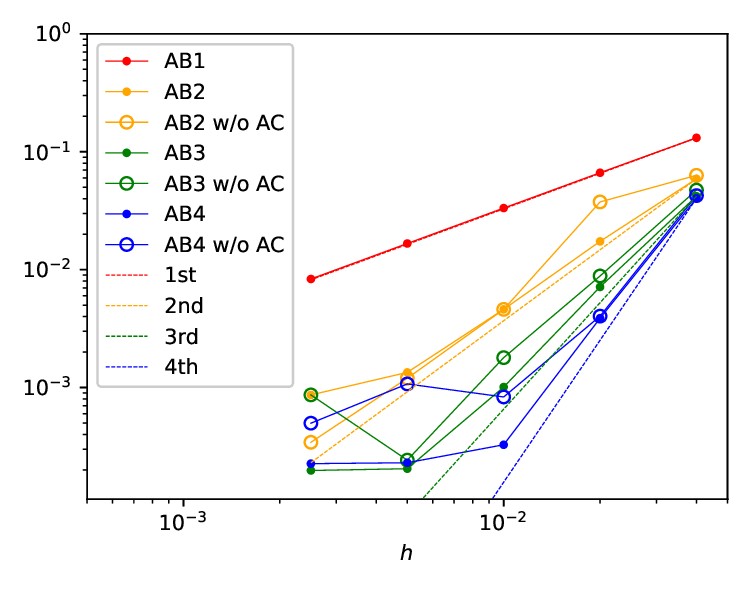}
\includegraphics[scale=0.3]{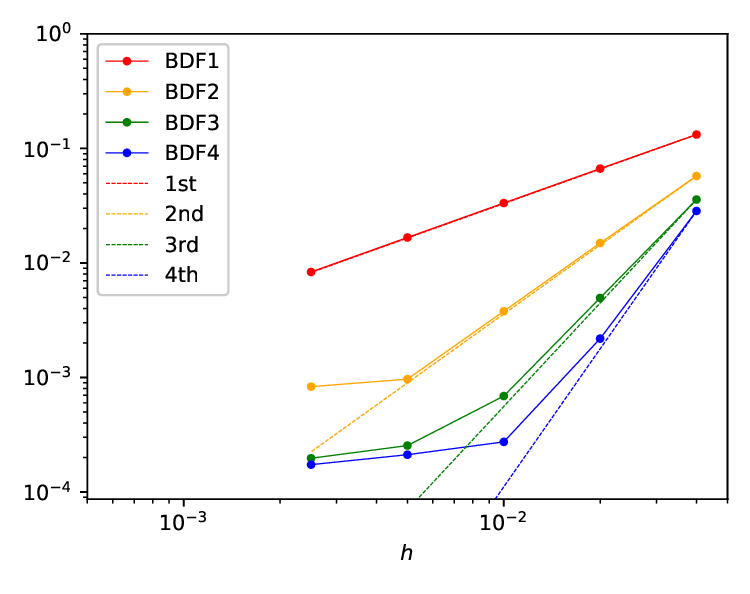}}
\subfloat[$\tilde{e}_{\hf}$ v.s. $h$]{
\includegraphics[scale=0.3]{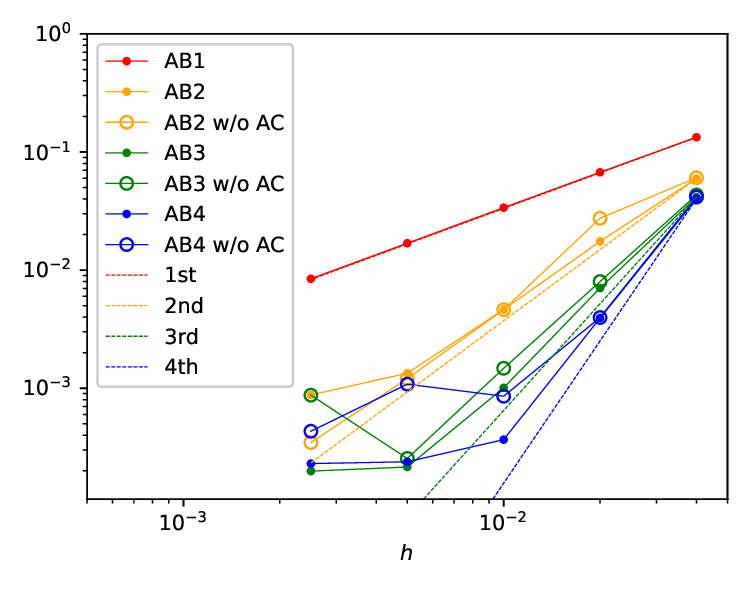}
\includegraphics[scale=0.3]{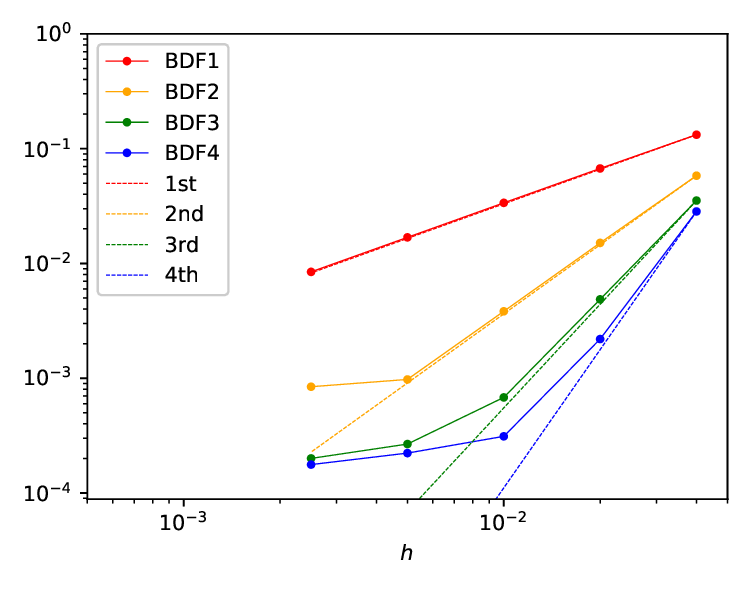}}\\
\caption{\em Training error $e_{\hf}$ and testing error $\tilde{e}_{\hf}$ versus $h$ via network-based A-B/BDF schemes with or without auxiliary conditions (ACs) of Lorenz system \eqref{case2}.}
\label{Fig_case2_errors_h_method}
\end{figure}

\begin{figure}
\begin{minipage}[t]{0.47\linewidth}
\centering
\includegraphics[scale=0.35]{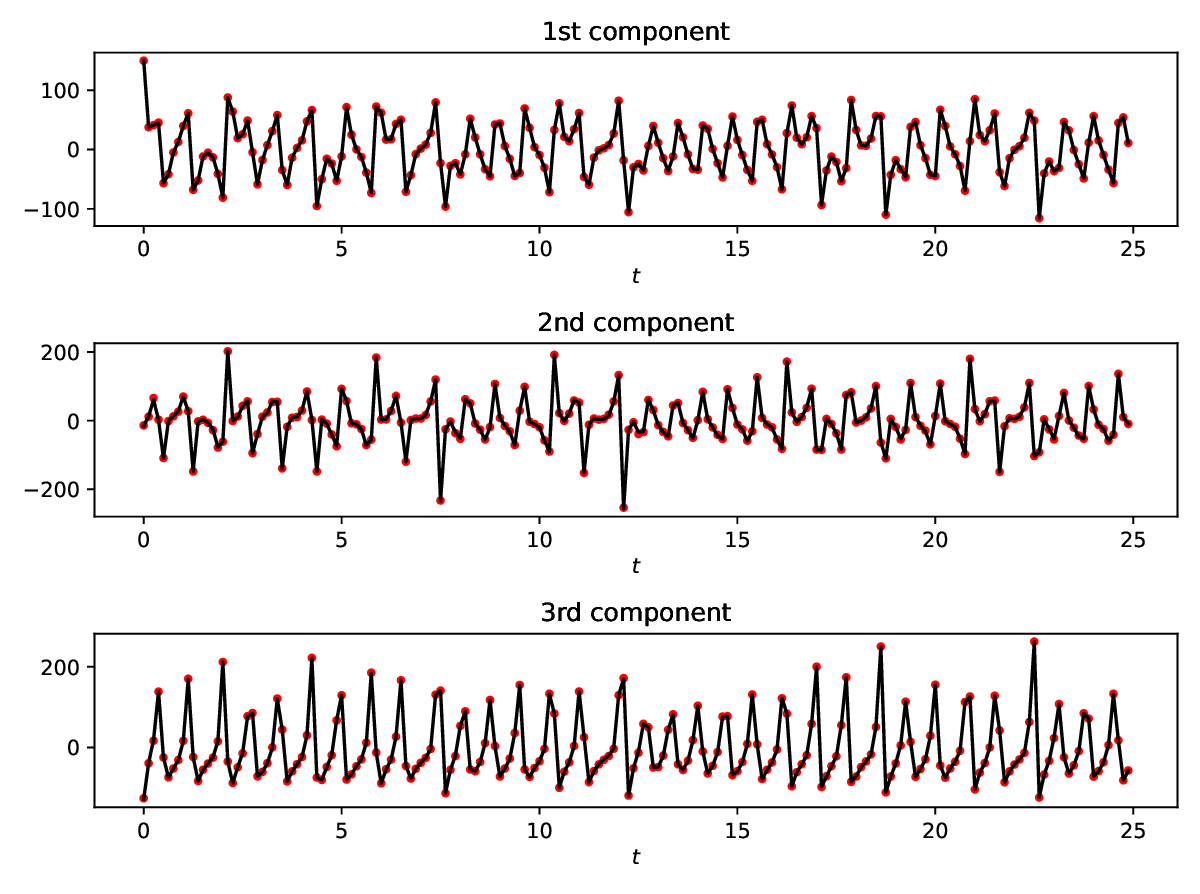}
\caption{\em The true governing function (solid black curves) and the approximate neural network (red circles) of Lorenz system \eqref{case2}.}
\label{Fig_case2_solution}
\end{minipage}
\hfill
\begin{minipage}[t]{0.47\linewidth}
\includegraphics[scale=0.4]{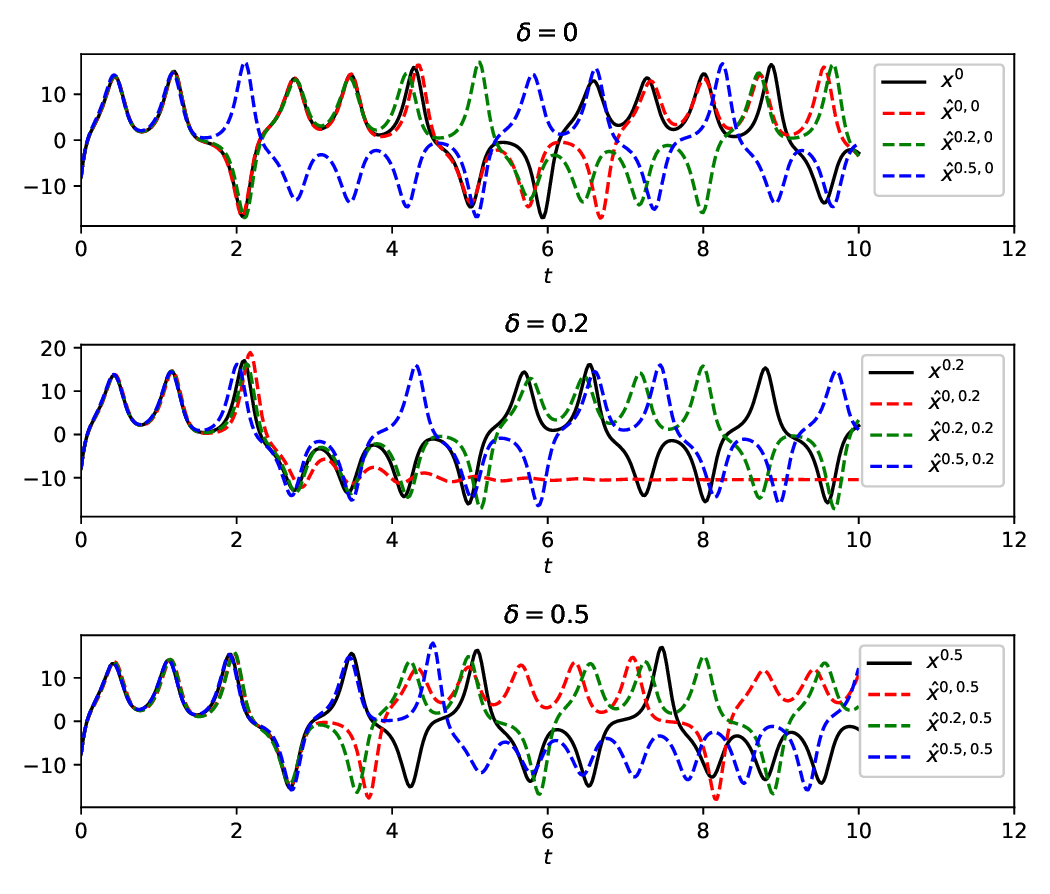}
\caption{\em The states $\Bx^\delta$ of the exact dynamics \eqref{case2} (solid black curves) and the states $\hat{\Bx}^{\varepsilon,\delta}$ of the discovered dynamics (dashed red curves for $\varepsilon=0$, dashed green curves for $\varepsilon=0.2$ and dashed blue curves for $\varepsilon=0.5$) with various initial values $[-8,7,27]+\delta$, $\delta=0$, $0.2$, $0.5$, in Lorenz system \eqref{case2}.}
\label{Fig_case2_prediction}
\end{minipage}
\end{figure}

\begin{figure}
\centering
\subfloat[$e_{\hf}$ v.s. iterations]{
\includegraphics[scale=0.36]{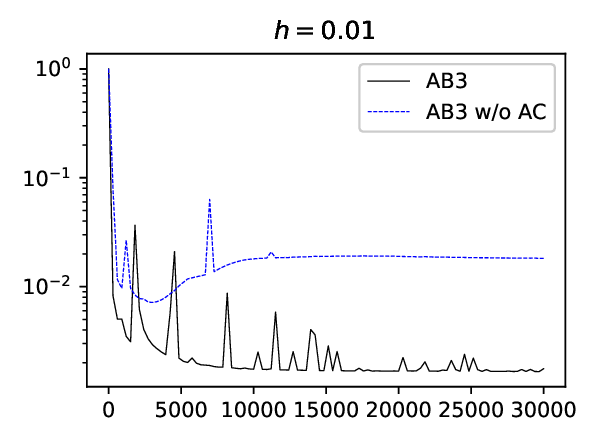}
\includegraphics[scale=0.36]{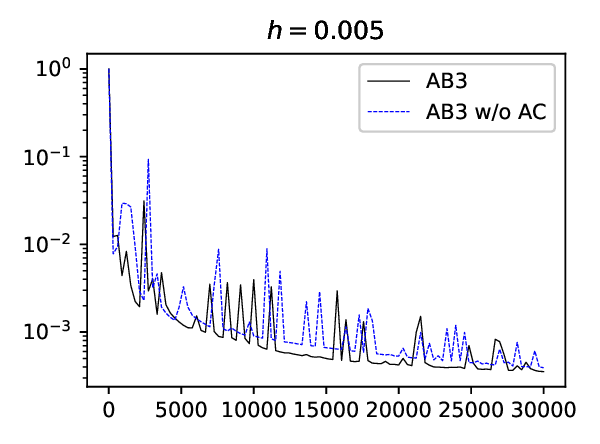}}
\subfloat[$\tilde{e}_{\hf}$ v.s. iterations]{
\includegraphics[scale=0.36]{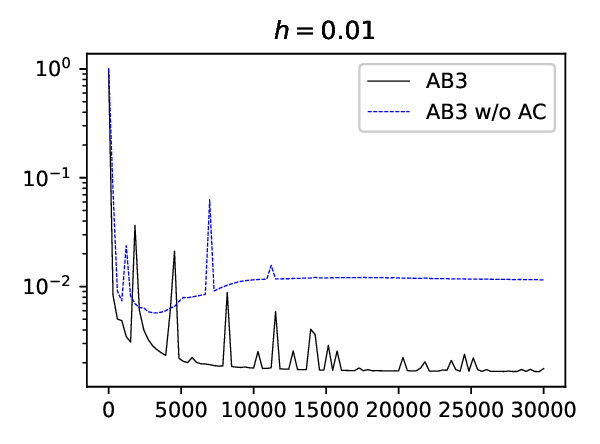}
\includegraphics[scale=0.36]{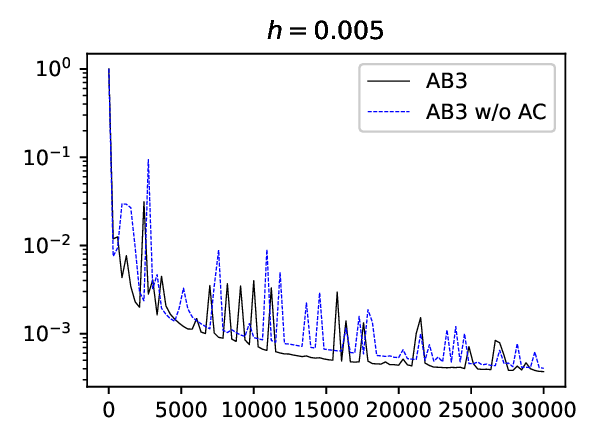}}\\
\subfloat[loss function v.s. iterations]{
\includegraphics[scale=0.36]{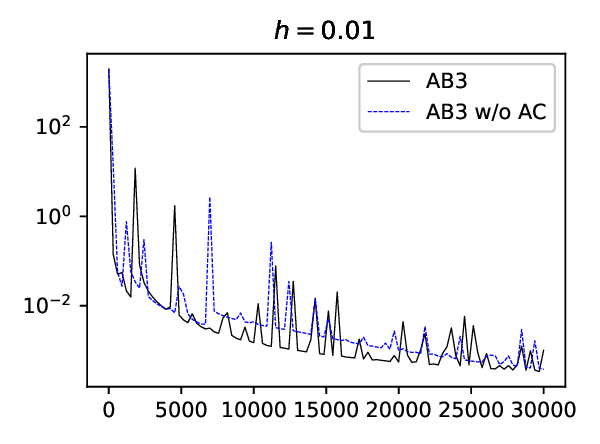}
\includegraphics[scale=0.36]{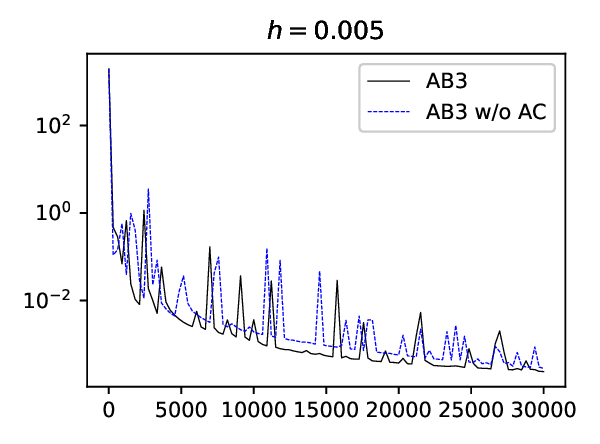}}
\caption{\em The training error $e_{\hf}$, testing error $\tilde{e}_{\hf}$ and loss function versus training iterations via network-based A-B schemes with or without auxiliary conditions (ACs) of Lorenz system \eqref{case2}.}
\label{Fig_case2_learning_information}
\end{figure}

\subsubsection{Prediction}
In real applications, we are interested in how well the discovered dynamics perform in making predictions. For this purpose, we first discover the system \eqref{case2} with initial values $[-8,7,27]+\varepsilon$ for $\varepsilon=0$, $0.2$ and $0.5$ by A-B scheme ($M=4$, $h=0.0025$), obtaining networks $\hat{\Bf}^0$, $\hat{\Bf}^{0.2}$ and $\hat{\Bf}^{0.5}$, respectively. Next, we solve the discovered system $\dot{\Bx}=\hat{\Bf}^\varepsilon$ with initial values $[-8,7,27]+\delta$ for $\delta=0$, $0.2$ and $0.5$ by the Matlab solver {\tt ode45} with tiny tolerances ({\tt RelTol}$=10^{-13}$, {\tt AbsTol}$=10^{-13}$), obtaining states $\hat{\Bx}^{\varepsilon,\delta}(t)$. Moreover, we compute the states of the exact system \eqref{case2} with initial values $[-8,7,27]+\delta$, denoting as $\Bx^\delta(t)$, for comparison. The first component of these states are presented in Figure \ref{Fig_case2_prediction}.

It can be observed that the predicted states $\hat{\Bx}^{\varepsilon,\delta}(t)$ become less accurate over time and ultimately fail to match the true states after a particular time. The inaccuracy of the long-time prediction for the Lorenz system is imputed to its positive Lyapunov exponent, which results in exponential growth of any tiny initial error over time \cite{Wang2008,Vaidyanathan2014}. Furthermore, the prediction performance also depends on the discrepancy between the initial value in prediction and the one for learning. Smaller discrepancy leads to better prediction. For example, in the case of $\delta=0$, it is shown that $\hat{\Bx}^{0,0}(t)$ moves consistently with $\Bx^0(t)$ until $t=5.1$, while $\hat{\Bx}^{0,0.2}(t)$ and $\hat{\Bx}^{0,0.5}(t)$ can only keep the consistency before $t=4.5$ and $t=1.5$, respectively. Similarly, for $\delta=0.2$ and $0.5$, the states $\hat{\Bx}^{\delta,\delta}(t)$ has a longer accurately predicted period than $\hat{\Bx}^{\varepsilon,\delta}(t)$ with $\varepsilon\neq\delta$. These numerical observations are due to the fact that only training samples of one trajectory are provided in deep learning and, hence, the recovered force term may not be accurate far away from the sampled trajectory.

\subsection{Glycolytic Oscillator}\label{Sec_case3}
We consider the model of oscillations in yeast glycolysis, which is a nonlinear biological system \cite{Daniels2015}. The model concentrates on 7 biochemical species:
\begin{equation}\label{case3}
\begin{cases}
\dot{S}_1=J_0-\frac{k_1S_1S_6}{1+(S_6/K_1)^q},\\
\dot{S}_2=2\frac{k_1S_1S_6}{1+(S_6/K_1)^q}-k_2S_2(N-S_5)-k_6S_2S_5,\\
\dot{S}_3=k_2S_2(N-S_5)-k_3S_3(A-S_6),\\
\dot{S}_4=k_3S_3(A-S_6)-k_4S_4S_5-\kappa(S_4-S_7),\\
\dot{S}_5=k_2S_2(N-S_5)-k_4S_4S_5-k_6S_2S_5,\\
\dot{S}_6=-2\frac{k_1S_1S_6}{1+(S_6/K_1)^q}+2k_3S_3(A-S_6)-k_5S_6,\\
\dot{S}_7=\psi\kappa(S_4-S_7)-kS_7,
\end{cases}\quad t\in[0,T],
\end{equation}
where the model parameters are taken from Table 1 in \cite{Daniels2015}.

\subsubsection{Convergence Rate Test with Respect to $h$}
We continue testing the convergence rate with respect to $h$ on the long time behavior of the system \eqref{case3} with $T=10$ and the initial value $[S_1,S_2,S_3,S_4,S_5,S_6,S_7]_{t=0}=\BS_0$, where $\BS_0=[1.125,0.95,0.075,0.16,0.265,0.7,0.092]$.

Similar to the preceding case, we assign $h=0.04$, $0.02$, $\cdots$, $0.04/2^6$ and conduct the optimization \eqref{23_1}-\eqref{23_2} with A-B ($M=1,\cdots,4$) and BDF ($M=1,\cdots,4$) schemes. The error decay versus $h$ is demonstrated in Figure \ref{Fig_case3_errors_h_method}. The dynamics of the true governing function and the neural network approximation obtained by A-B ($M=4$, $h=0.04/2^6$) are presented in Figure \ref{Fig_case3_solution}. It is observed that when $h$ is relatively large, the numerical convergence rates of all schemes are much lower than the theoretical ones. One explanation is that the low regularity of this system worsens the accuracy of LMMs. In Figure \ref{Fig_case3_solution}, it is clear that the governing function appears highly oscillatory with only $C^0$ regularity. Even in this challenging case, high-order LMM schemes can still recover the governing function up to $O(10^{-3})$ accuracy as $h$ decreases.

\begin{figure}
\centering
\subfloat[$e_{\hf}$ v.s. $h$]{
\includegraphics[scale=0.3]{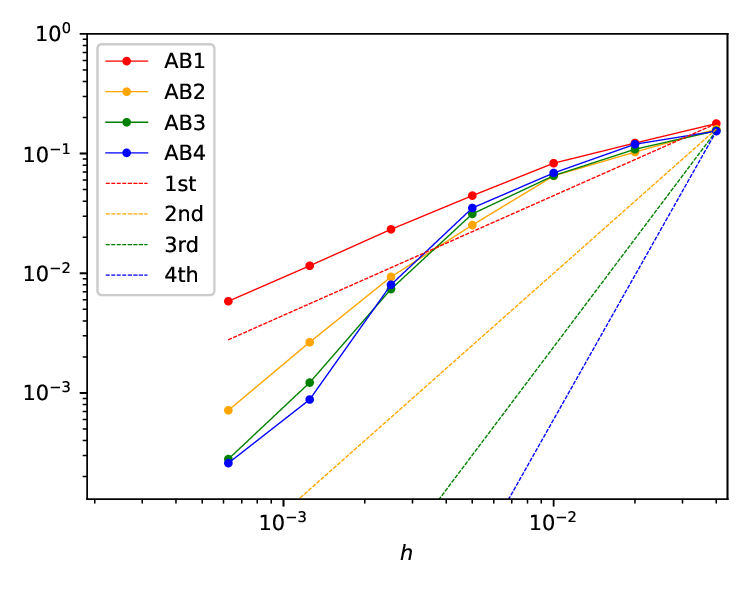}
\includegraphics[scale=0.3]{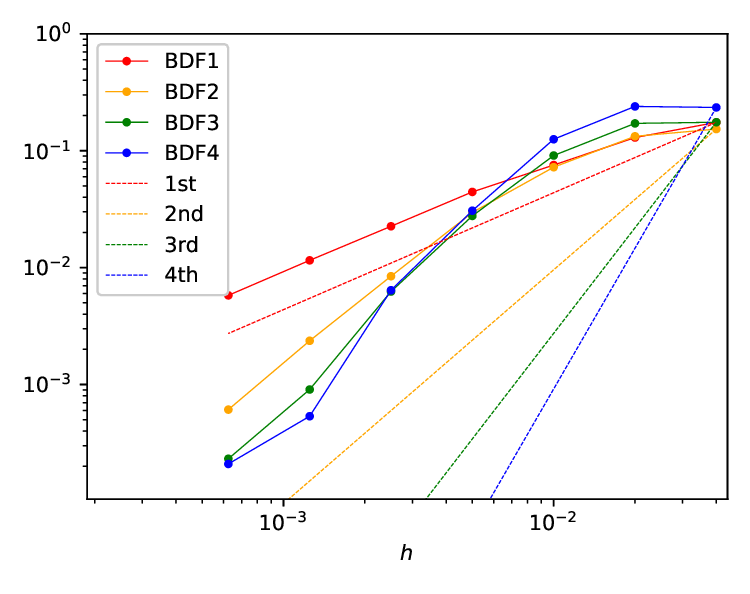}}
\subfloat[$\tilde{e}_{\hf}$ v.s. $h$]{
\includegraphics[scale=0.3]{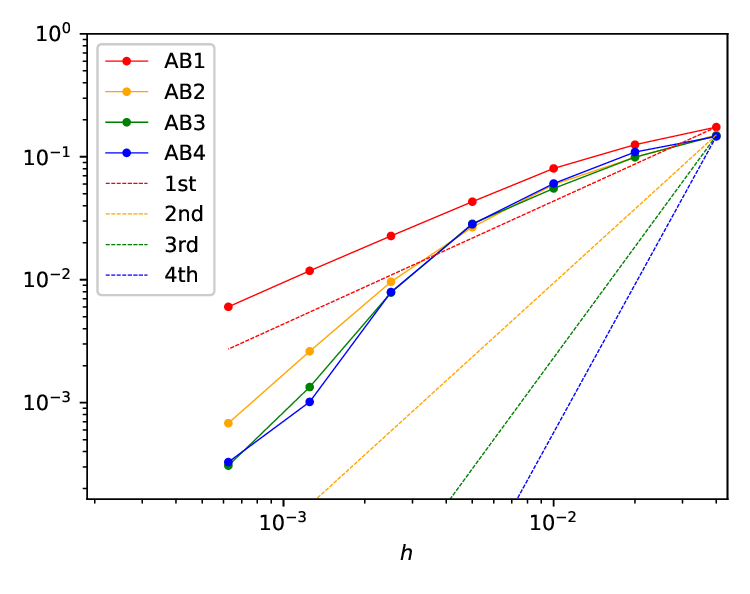}
\includegraphics[scale=0.3]{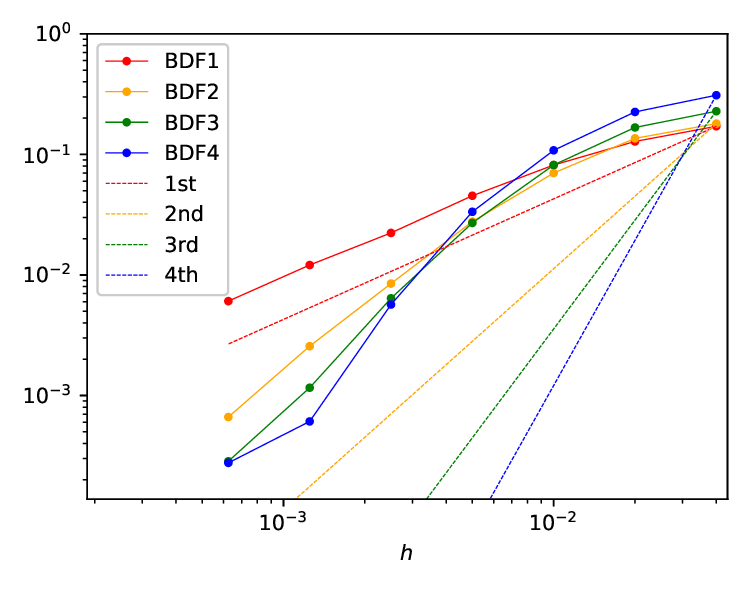}}\\
\caption{\em Training error $e_{\hf}$ and testing error $\tilde{e}_{\hf}$ versus $h$ via network-based A-B/BDF schemes of Glycolytic oscillator \eqref{case3}}
\label{Fig_case3_errors_h_method}
\end{figure}

\begin{figure}
\includegraphics[scale=0.5]{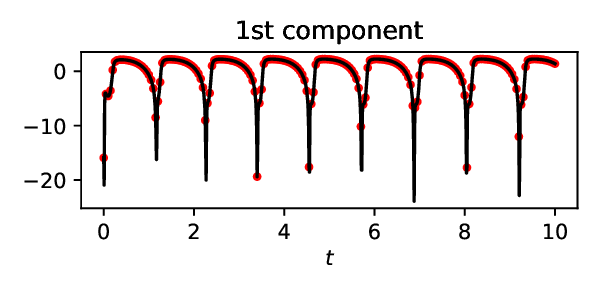}
\includegraphics[scale=0.5]{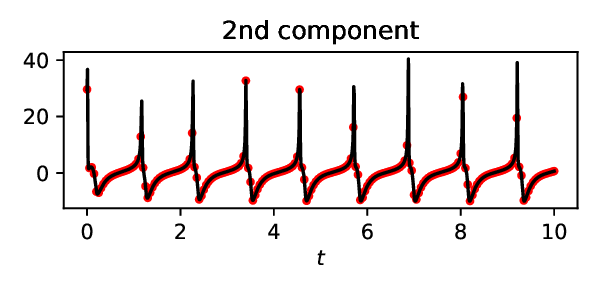}
\includegraphics[scale=0.5]{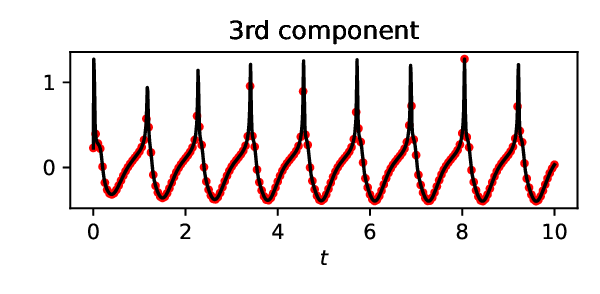}\\
\includegraphics[scale=0.5]{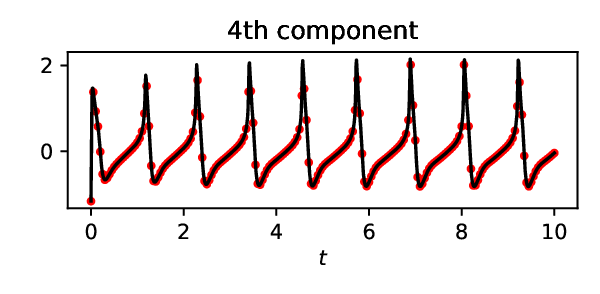}
\includegraphics[scale=0.5]{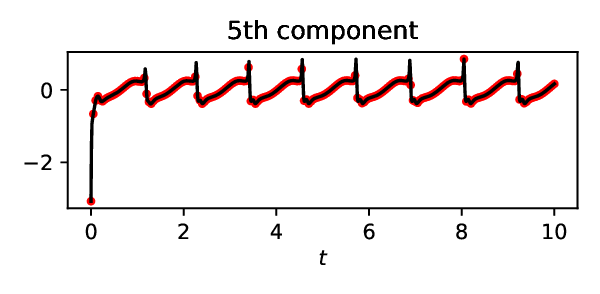}
\includegraphics[scale=0.5]{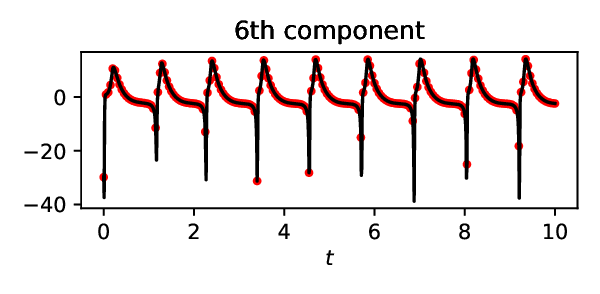}
\caption{\em The true governing function (solid black curves) and the approximate neural network (red circles) of Glycolytic oscillator \eqref{case3}.}
\label{Fig_case3_solution}
\end{figure}

\subsubsection{Prediction}
Similar to the preceding example, a prediction test is conducted for the glycolytic oscillator system. We compare the states of the exact system \eqref{case3} and the system discovered by the A-B scheme ($M=4$, $h=0.00125$) with training data generated with the initial value $\BS_0$. The states are computed with initial values $\BS_0+\delta$ for $\delta=0$, $0.05$ and $0.2$. In Figure \ref{Fig_case3_prediction}, we present the first component of states. The overall prediction performance in this example is better than that of the chaotic Lorenz system. The forecast time-series when $\delta=0$ is very accurate. The forecast time-series when $\delta=0.05$ and $0.2$ are also reasonably accurate, though the prediction error is obvious when the prediction time is large.

\begin{figure}
\begin{minipage}[t]{0.47\linewidth}
\centering
\includegraphics[scale=0.4]{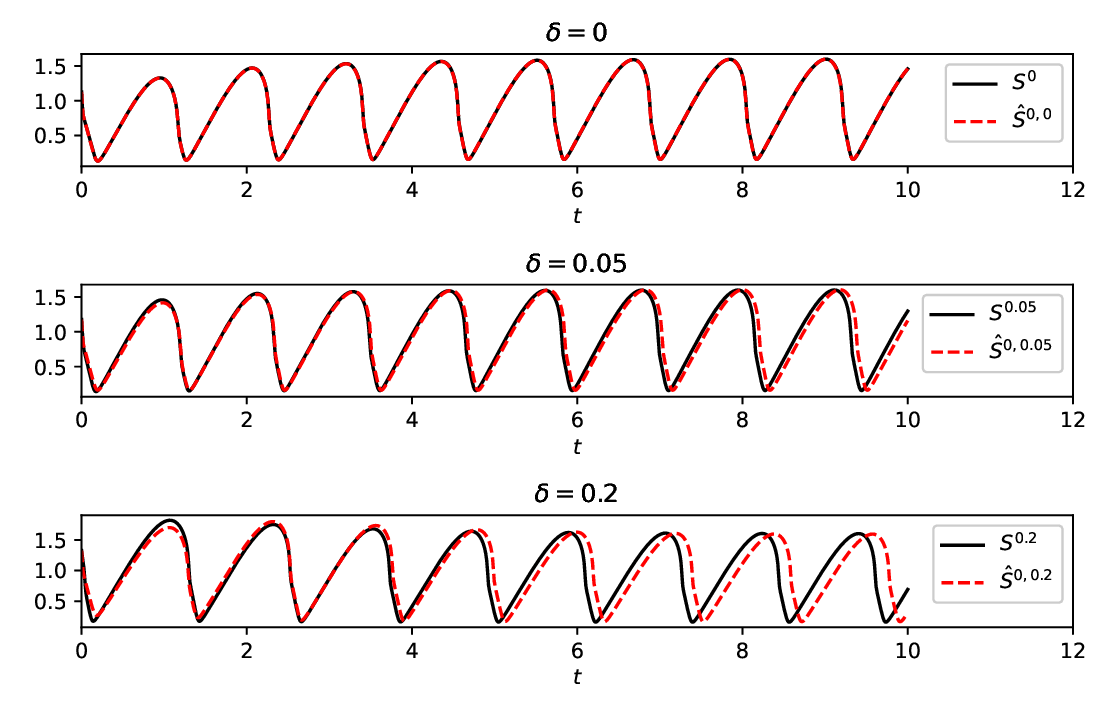}
\caption{\em The states $\BS^\delta$ of exact dynamics \eqref{case3} (solid black curves) and the states $\hat{\BS}^{0,\delta}$ of discovered dynamics (dashed red curves) for initial values $\BS_0+\delta$ with $\delta=0$, $0.05$, $0.2$, in Glycolytic oscillator \eqref{case3}.}
\label{Fig_case3_prediction}
\end{minipage}
\hfill
\begin{minipage}[t]{0.47\linewidth}
\centering
\includegraphics[scale=0.6]{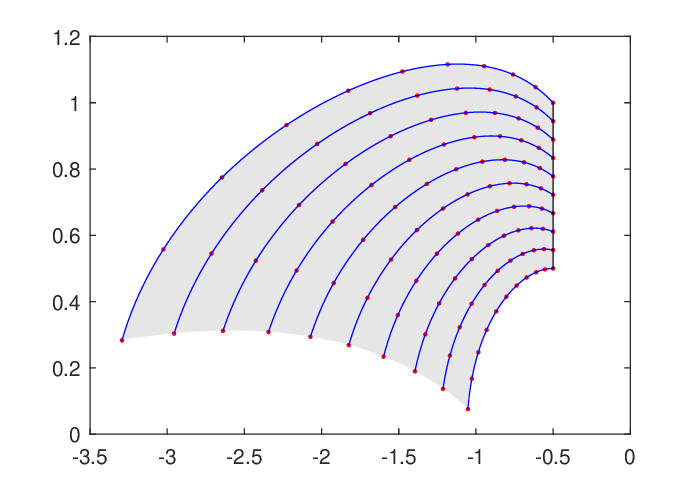}
\caption{\em State points (red points) collected from a sequence of trajectories (blue curves), which are computed from the model system \eqref{case4} with initial points equidistantly distributed on the line segment $\Gamma$ (black) in the model system \eqref{case4}}
\label{Fig_case4_domain}
\end{minipage}
\end{figure}

\subsection{Discovery on a Compact Region}
In this example, we consider the following model system
\begin{equation}\label{case4}
\begin{cases}\dot{x_1}=2x_1x_2,~\dot{x_2}=x_1+x_2,\quad t\in[0,1]\\\left[x_1,x_2\right]_{t=0}=\tilde{\Bx},\end{cases}.
\end{equation}
The initial value point $\tilde{\Bx}$ is chosen from the line segment $\Gamma=\{(-0.5,x_2):0.5\leq x_2\leq1\}$. All the trajectories starting from $\Gamma$ within $t\in[0,1]$ will form a compact region in $\mathbb{R}^2$, denoted as $\Omega$. Note that $\Omega$ is enclosed with $\Gamma$, $\{(x_1(1;\tilde{\Bx}),x_2(1;\tilde{\Bx})):\tilde{\Bx}\in\Gamma)\}$ and two outside trajectories. We collect the data of discrete states in $\Omega$. Specifically, we choose $N'$ points $\tilde{\Bx}_1,\cdots,\tilde{\Bx}_{N'}$ by equidistantly partitioning $\Gamma$ as the initial values. Next, we compute the trajectories $\Bx(t;\tilde{\Bx}_{n'})$ for $n'=1,\cdots,N'$ and take $\{\Bx(t_n;\tilde{\Bx}_{n'})\}_{n=0,\cdots,N;n'=1\cdots,N'}$ as the dataset. To display the data sampling clearly, we show the state points, trajectories and $\Gamma$ for $N=N'=10$ in Figure \ref{Fig_case4_domain}, where the shaded region enclosed by $\Gamma$ and outside trajectories is exactly $\Omega$.

\subsubsection{Convergence Rate with Respect to $h$}
Since the loss function of the discovery on a compact region is merely the sum of loss functions of the discovery on every involved trajectory (see \eqref{29_2}), the implementation for the discovery on a compact region should share the same properties as the implementation on a trajectory, including the optimization errors and implicit regularization. The tests with respect to these properties will not be repeated in this example. Instead, we perform the test of the convergence rate with respect to $h$ to valid the error estimate that $e_{\hf}=O(h^p)$ if the network size is large enough. We take A-B $(M=1,\cdots,4)$ and BDF $(M=1,\cdots,4)$ schemes for $h=0.1,0.05,\cdots,0.1/2^4$, then compute the training and testing errors (shown in Figure \ref{Fig_case4_errors_h_method}). The theoretical orders of error decay are observed when $h$ is relatively large. While the overall error stops decreasing when $h$ is too small due to the dominance of the optimization error. Specifically, the 2-D profiles of the obtained approximate networks $\hf_j$ and the errors $\hf_j-f_j$ for $j=1,2$ are presented in Figure \ref{Fig_case4_profile}. The errors are observed to be below $O(10^{-3})$ everywhere in $\Omega$.

\begin{figure}
\centering
\subfloat[$e_{\hf}$ v.s. $h$]{
\includegraphics[scale=0.3]{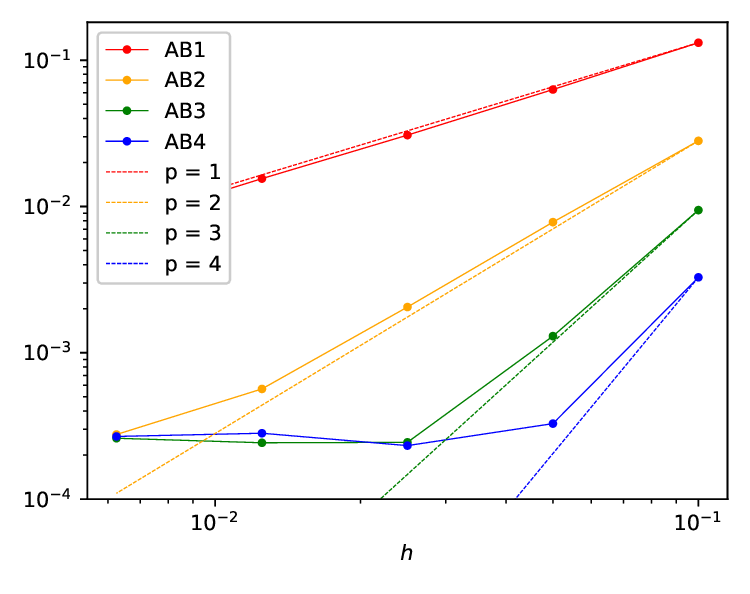}
\includegraphics[scale=0.3]{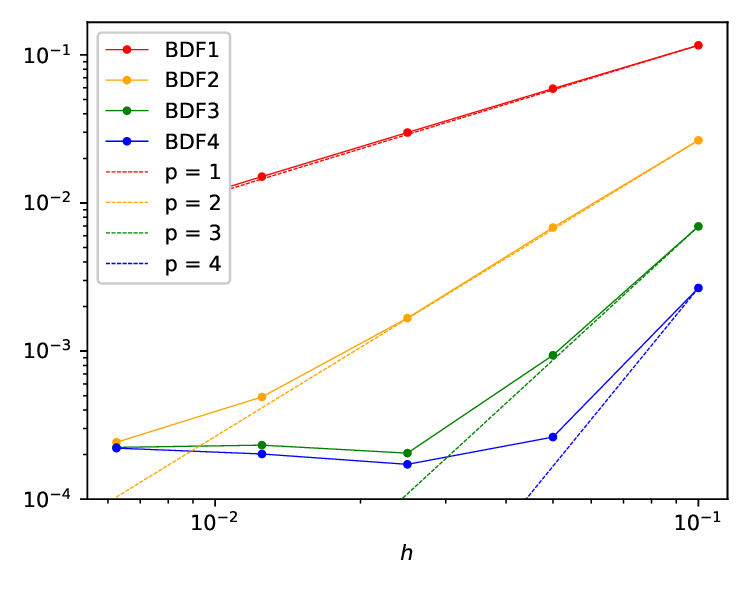}}
\subfloat[$\tilde{e}_{\hf}$ v.s. $h$]{
\includegraphics[scale=0.3]{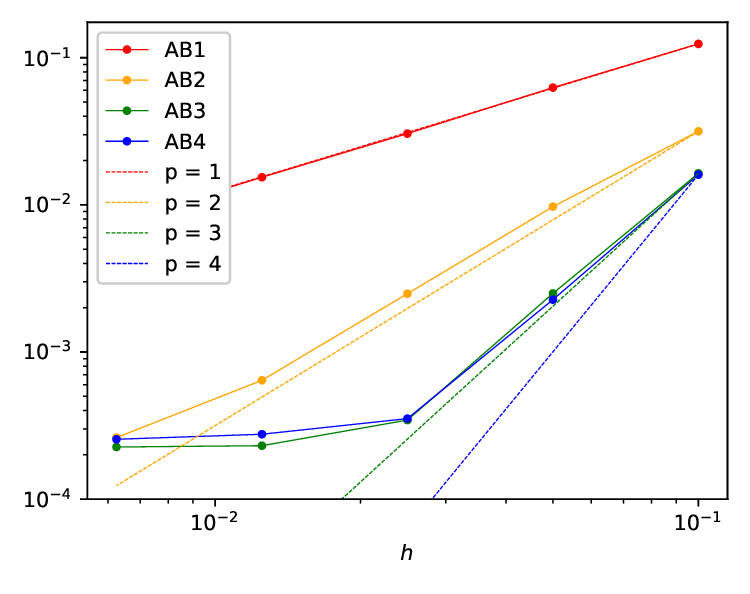}
\includegraphics[scale=0.3]{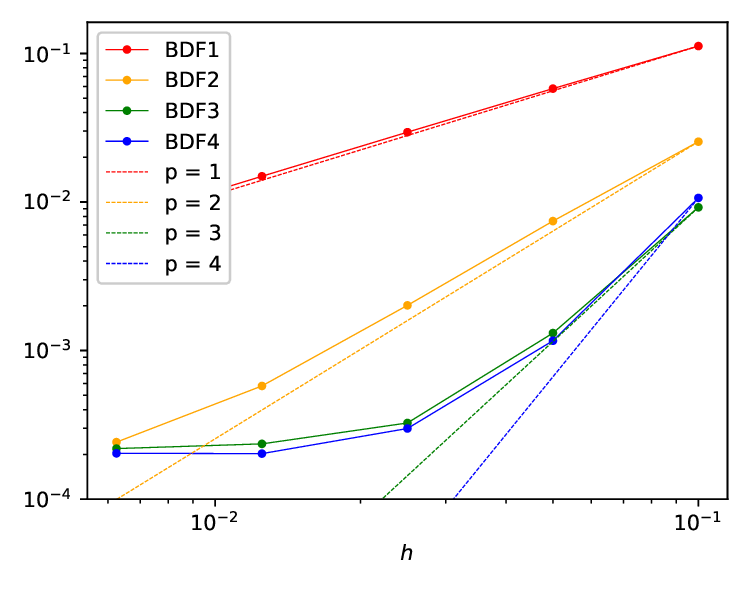}}\\
\caption{\em Training error $e_{\hf}$ and testing error $\tilde{e}_{\hf}$ versus $h$ via network-based A-B/BDF schemes of the model system \eqref{case4}}
\label{Fig_case4_errors_h_method}
\end{figure}

\begin{figure}
\centering
\subfloat[The first component]{
\includegraphics[scale=0.5]{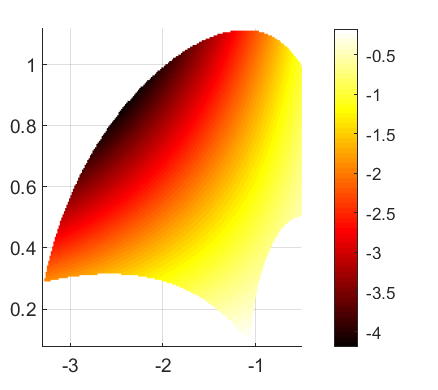}
\includegraphics[scale=0.5]{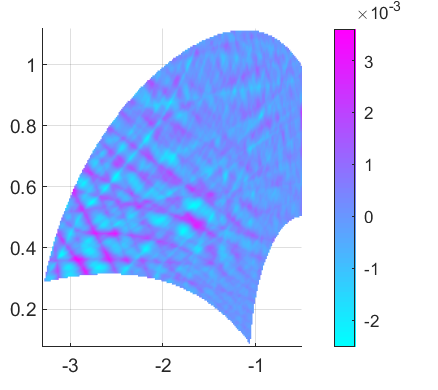}}
\subfloat[The second component]{
\includegraphics[scale=0.5]{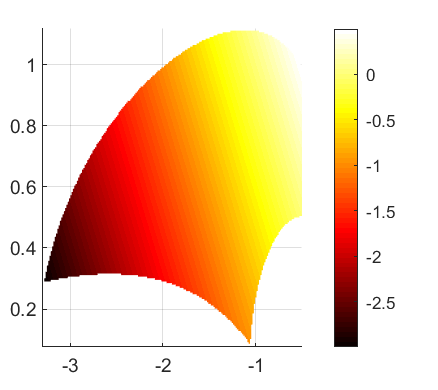}
\includegraphics[scale=0.5]{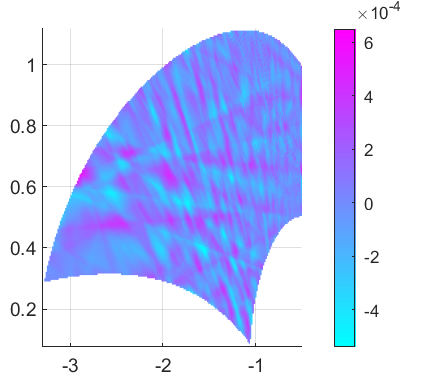}}\\
\caption{\em The profiles of obtained networks and errors of the model system \eqref{case4}}
\label{Fig_case4_profile}
\end{figure}

\section{Conclusion}
This paper presents a rigorous convergence analysis of the network-based LMMs that discover unknown dynamical systems. The main result shows that the $\ell^2$ grid error of the approximate function is bounded by $O(\kappa_2(\BA_h)(h^p+e_{\mathcal{A}}))$, where $\kappa_2(\BA_h)$ is the $2$-condition number of the corresponding matrix derived from the LMM scheme and $e_{\mathcal{A}}$ is the approximation error of the admissible set. This result is combined with approximation properties of deep neural networks to develop the error estimates for network-based LMMs. We also characterize the root condition to determine the uniform boundedness of $\kappa_2(\BA_h)$. Besides, several numerical experiments are conducted to validate our theory. We observe that the error decaying orders of various LMMs are close to the theoretical ones.

In the experiments, we also test the network-based method either using formulations without auxiliary conditions or using unstable LMM schemes. In theory, we can not guarantee the uniqueness of the solution at grid points in the former case, and we do not have upper bounds for the discovery error in the latter case. However, in practice, deep learning with gradient descent can still find solutions with errors in the similar ranges of their stable counterparts. More traditional approximations, such as grid functions and polynomials, are less robust and sensitive to the choice of solvers in comparison (see Appendix \ref{Sec_appendix}).

One limitation of our work is that the error estimation only quantifies the grid error, which is evaluated at the given sample locations. The generalization error out of sample locations is still theoretically unknown, though we observe excellent generalization performance in numerical experiments. Inspired by the works on generalization performance of deep learning for regression problems \cite{Kawaguchi2017,Mei2018,Mei2019}, decision problems \cite{Reppen2020} and PDEs \cite{Luo2020}, it is interesting to improve the error estimation from sample grid points to the whole trajectory. For example, the overlearning performance is studied in \cite{Reppen2020} using Rademacher complexity. Moreover, recurrent neural networks (RNNs) have been widely employed to build machine learning models of temporal data. The research on RNN generalization \cite{Allen-Zhu2019,Allen-Zhu2019_2,Oymak2019,Li2020} may shed light on the convergence analysis of the dynamics discovery.

Furthermore, our error analysis concentrates on the formulation with auxiliary conditions, while numerical tests show that the deep learning approach without auxiliary conditions can still perform well when the time step size is small enough. This might be due to the implicit regularization of the gradient descent and neural networks. Consequently, further investigation of the implicit regularization without auxiliary conditions is very interesting.

{\bf Acknowledgments.} Q. D. is supported in part by the US NSF CCF-1704833 and DMS-2012562. Y. G. is supported by Singapore MOE AcRF Grants R-146-000-271-112. C. Z. is supported by Singapore MOE (Ministry of Educations) AcRF Grants R-146- 000-271-112 and R-146-000-284-114 as well as NSFC Grant No. 11871364. H. Y. was partially supported by the US National Science Foundation under award DMS-1945029.

\bibliography{expbib}

\begin{thebibliography}{10}

\bibitem{Allen-Zhu2019}
Z.~Allen-Zhu and Y.~Li.
\newblock {Can SGD learn recurrent neural networks with provable
  generalization?}
\newblock In {\em Advances in Neural Information Processing Systems},
  volume~32, pages 10331--10341. Curran Associates, Inc., 2019.

\bibitem{Allen-Zhu2019_2}
Z.~Allen-Zhu, Y.~Li, and Z.~Song.
\newblock On the convergence rate of training recurrent neural networks.
\newblock In {\em Advances in Neural Information Processing Systems},
  volume~32, pages 6676--6688. Curran Associates, Inc., 2019.

\bibitem{Amodio1996}
P.~Amodio and L.~Brugnano.
\newblock The conditioning of toeplitz band matrices.
\newblock {\em Mathematical and Computer Modelling}, 23(10):29--42, 1996.

\bibitem{Atkinson2011}
K.~Atkinson, W.~Han, and D.~E. Stewart.
\newblock {\em Numerical solution of ordinary differential equations}.
\newblock Wiley, 2011.

\bibitem{Baraniuk2009}
R.~G. Baraniuk and M.~B. Wakin.
\newblock Random projections of smooth manifolds.
\newblock {\em Foundations of Computational Mathematics}, 9:51–77, 2009.

\bibitem{Barron1992}
A.~R. Barron.
\newblock Neural net approximation.
\newblock In {\em Proceedings of the 7th Yale Workshop on Adaptive and Learning
  Systems}, pages 69--72. Yale University Press, 1992.

\bibitem{Bongard2007}
J.~Bongard and H.~Lipson.
\newblock Automated reverse engineering of nonlinear dynamical systems.
\newblock {\em Proceedings of the National Academy of Sciences},
  104:9943--9948, 2007.

\bibitem{Boya2003}
L.~J. Boya, E.~C.~G. Sudarshan, and T.~Tilma.
\newblock Volumes of compact manifolds.
\newblock {\em Reports on Mathematical Physics}, 52:401--422, 2003.

\bibitem{Brunton2016}
S.~L. Brunton, J.~L. Proctor, and J.~N. Kutz.
\newblock Discovering governing equations from data by sparse identification of
  nonlinear dynamical systems.
\newblock {\em Proceedings of the National Academy of Sciences},
  113(15):3932--3937, 2016.

\bibitem{Cao2019}
Y.~Cao, Z.~Fang, Y.~Wu, D.-X. Zhou, and Q.~Gu.
\newblock Towards understanding the spectral bias of deep learning.
\newblock {\em CoRR}, arXiv:1912.01198, 2019.

\bibitem{Chen2019}
Z.~Chen, Y.~Cao, D.~Zou, and Q.~Gu.
\newblock {How much over-parameterization is sufficient to learn deep ReLU
  networks?}
\newblock {\em arXiv e-prints}, arXiv:1911.12360, 2019.

\bibitem{dahlquist56}
Germund Dahlquist.
\newblock Convergence and stability in the numerical integration of ordinary
  differential equations.
\newblock {\em Mathematica Scandinavica}, pages 33--53, 1956.

\bibitem{dahlquist63}
Germund~G Dahlquist.
\newblock A special stability problem for linear multistep methods.
\newblock {\em BIT Numerical Mathematics}, 3(1):27--43, 1963.

\bibitem{Daniels2015}
B.~C. Daniels and I.~Nemenman.
\newblock {Efficient inference of parsimonious phenomenological models of
  cellular dynamics using S-systems and alternating regression}.
\newblock {\em PloS one}, 10:e0119821, 2015.

\bibitem{Du2018}
S.~S. Du, X.~Zhai, B.~Poczos, and A.~Singh.
\newblock Gradient descent provably optimizes over-parameterized neural
  networks.
\newblock {\em arXiv e-prints}, arXiv:1810.02054, 2018.

\bibitem{E2021}
W.~E, C.~Ma, and L.~Wu.
\newblock {The Barron space and the flow-induced function spaces for neural
  network models}.
\newblock {\em Constructive Approximation}, 2021.

\bibitem{Feng2021}
J.~Feng, Y.~Ren, and S.~Tang.
\newblock {Data-driven discovery of interacting particle systems using Gaussian
  processes}.
\newblock {\em arXiv e-prints}, arXiv:2106.02735, 2021.

\bibitem{Gautschi1997}
W.~Gautschi.
\newblock {\em Numerical analysis}.
\newblock Springer Science \& Business Media, 1997.

\bibitem{Gulgec2019}
N.~S. Gulgec, Z.~Shi, N.~Deshmukh, S.~Pakzad, and M.~Tak\'{a}\v{c}.
\newblock {FD-Net with auxiliary time steps: fast prediction of PDEs using
  Hessian-free trust-region methods}.
\newblock {\em arXiv e-prints}, arXiv:1910.12680, 2019.

\bibitem{Guliyev2018}
N.~J. Guliyev and V.~E. Ismailov.
\newblock {Approximation capability of two hidden layer feedforward neural
  networks with fixed weights}.
\newblock {\em Neurocomputing}, 316:262--269, 2018.

\bibitem{HARLIM2021109922}
John Harlim, Shixiao~W. Jiang, Senwei Liang, and Haizhao Yang.
\newblock Machine learning for prediction with missing dynamics.
\newblock {\em Journal of Computational Physics}, 428:109922, 2021.

\bibitem{henrici62}
Peter Henrici.
\newblock {\em Discrete variable methods in ordinary differential equations}.
\newblock Wiley, 1962.

\bibitem{Jacot2018}
A.~Jacot, F.~Gabriel, and C.~Hongler.
\newblock Neural tangent kernel: Convergence and generalization in neural
  networks.
\newblock In S.~Bengio, H.~Wallach, H.~Larochelle, K.~Grauman, N.~Cesa-Bianchi,
  and R.~Garnett, editors, {\em Advances in Neural Information Processing
  Systems}, volume~31, pages 8571--8580. Curran Associates, Inc., 2018.

\bibitem{Kang2019}
S.~H. Kang, W.~Liao, and Y.~Liu.
\newblock Ident: Identifying differential equations with numerical time
  evolution.
\newblock {\em arXiv e-prints}, arXiv:1904.03538, 2019.

\bibitem{Keller2019}
R.~Keller and Q.~Du.
\newblock {Discovery of dynamics using linear multistep methods}.
\newblock {\em SIAM J. Numer. Anal.}, 59:429--455, 2021.

\bibitem{Kawaguchi2017}
Yoshua~Bengio Kenji~Kawaguchi, Leslie Pack~Kaelbling.
\newblock Generalization in deep learning.
\newblock {\em arXiv e-prints}, arXiv:1710.05468, 2017.

\bibitem{Kingma2014}
D.~P. Kingma and J.~Ba.
\newblock {Adam: a method for stochastic optimization}.
\newblock {\em arXiv e-prints}, arXiv:1412.6980, 2014.

\bibitem{Kocijan2005}
J.~Kocijan, A.~Girard, B.~Banko, and R.~Murray-Smith.
\newblock Dynamic systems identification with gaussian processes.
\newblock {\em Mathematical and Computer Modelling of Dynamical Systems},
  11:411--424, 2005.

\bibitem{Lei2018}
D.~Lei, Z.~Sun, Y.~Xiao, and W.~Y. Wang.
\newblock Implicit regularization of stochastic gradient descent in natural
  language processing: Observations and implications.
\newblock {\em arXiv e-prints}, arXiv:1811.00659, 2018.

\bibitem{Li2020}
Z.~Li, J.~Han, W.~E, and Q.~Li.
\newblock On the curse of memory in recurrent neural networks: approximation
  and optimization analysis.
\newblock {\em arXiv e-prints}, arXiv:2009.07799, 2020.

\bibitem{Long2019}
Z.~Long, Y.~Lu, and B.~Dong.
\newblock {PDE-Net 2.0: Learning PDEs from data with a numeric-symbolic hybrid
  deep network}.
\newblock {\em Journal of Computational Physics}, 399:108925, 2019.

\bibitem{Lu2019}
F.~Lu, M.~Zhong, and S.~Tang.
\newblock Nonparametric inference of interaction laws in systems of agents from
  trajectory data.
\newblock {\em Proceedings of the National Academy of Sciences},
  116(29):14424--14433, 2019.

\bibitem{Lu2021}
J.~Lu, Z.~Shen, H.~Yang, and S.~Zhang.
\newblock Deep network approximation for smooth functions.
\newblock {\em SIAM Journal on Mathematical Analysis}, 53:5465–5506, 2021.

\bibitem{Luo2019}
T.~Luo, Z.~Ma, Z.~J. Xu, and Y.~Zhang.
\newblock Theory of the frequency principle for general deep neural networks.
\newblock {\em arXiv e-prints}, arXiv:1906.09235, 2019.

\bibitem{Luo2020}
T.~Luo and H.~Yang.
\newblock Two-layer neural networks for partial differential equations:
  Optimization and generalization theory.
\newblock {\em arXiv e-prints}, arXiv:2006.15733, 2020.

\bibitem{Maeda1978}
M.~Maeda.
\newblock Volume estimate of submanifolds in compact riemannian manifolds.
\newblock {\em Journal of the Mathematical Society of Japan}, 30:533--551,
  1978.

\bibitem{Mayers2003}
D.~Mayers and E.~S\"{u}li.
\newblock {\em An introduction to numerical analysis}.
\newblock Cambridge University Press, 2003.

\bibitem{Mei2019}
S.~Mei, T.~Misiakiewicz, and A.~Montanari.
\newblock Mean-field theory of two-layers neural networks: dimension-free
  bounds and kernel limit.
\newblock {\em arXiv e-prints}, arXiv:1902.06015, 2019.

\bibitem{Mei2018}
S.~Mei, A.~Montanari, and P.-M. Nguyen.
\newblock A mean field view of the landscape of two-layer neural networks.
\newblock {\em Proceedings of the National Academy of Sciences},
  115(33):E7665--E7671, 2018.

\bibitem{Montanelli2020}
H.~Montanelli and H.~Yang.
\newblock {Error bounds for deep ReLU networks using the Kolmogorov–Arnold
  superposition theorem}.
\newblock {\em Neural Networks}, 129:1--6, 2020.

\bibitem{Montanelli2021}
H.~Montanelli, H.~Yang, and Q.~Du.
\newblock {Deep ReLU networks overcome the curse of dimensionality for
  bandlimited functions}.
\newblock {\em Journal of Computational Mathematics}, 39(6):801–815, 2021.

\bibitem{Neyshabur2017}
B.~Neyshabur, R.~Tomioka, R.~Salakhutdinov, and N.~Srebro.
\newblock Geometry of optimization and implicit regularization in deep
  learning.
\newblock {\em arXiv e-prints}, arXiv:1705.03071, 2017.

\bibitem{Oymak2019}
S.~Oymak.
\newblock Stochastic gradient descent learns state equations with nonlinear
  activations.
\newblock In Alina Beygelzimer and Daniel Hsu, editors, {\em Proceedings of the
  Thirty-Second Conference on Learning Theory}, volume~99 of {\em Proceedings
  of Machine Learning Research}, pages 2551--2579, Phoenix, USA, 25--28 Jun
  2019. PMLR.

\bibitem{Qin2019}
T.~Qin, K.~Wu, and D.~Xiu.
\newblock Data driven governing equations approximation using deep neural
  networks.
\newblock {\em Journal of Computational Physics}, 395:620--635, 2019.

\bibitem{pmlr-v97-rahaman19a}
Nasim Rahaman, Aristide Baratin, Devansh Arpit, Felix Draxler, Min Lin, Fred
  Hamprecht, Yoshua Bengio, and Aaron Courville.
\newblock On the spectral bias of neural networks.
\newblock In Kamalika Chaudhuri and Ruslan Salakhutdinov, editors, {\em
  Proceedings of the 36th International Conference on Machine Learning},
  volume~97 of {\em Proceedings of Machine Learning Research}, pages
  5301--5310, Long Beach, California, USA, 09--15 Jun 2019. PMLR.

\bibitem{Raissi2018_1}
M.~Raissi.
\newblock {Deep hidden physics models: Deep learning of nonlinear partial
  differential equations}.
\newblock {\em The Journal of Machine Learning Research}, 19(1), 2018.

\bibitem{Raissi2018_2}
M.~Raissi and G.~E. Karniadakis.
\newblock Hidden physics models: Machine learning of nonlinear partial
  differential equations.
\newblock {\em Journal of Computational Physics}, 357:125--141, 2018.

\bibitem{Raissi2017_1}
M.~Raissi, P.~Perdikaris, and G.~E. Karniadakis.
\newblock Inferring solutions of differential equations using noisy
  multi-fidelity data.
\newblock {\em Journal of Computational Physics}, 335:736--746, 2017.

\bibitem{Raissi2017_2}
M.~Raissi, P.~Perdikaris, and G.~E. Karniadakis.
\newblock Machine learning of linear differential equations using gaussian
  processes.
\newblock {\em Journal of Computational Physics}, 348:683--693, 2017.

\bibitem{Raissi2018_3}
M.~Raissi, P.~Perdikaris, and G.~E. Karniadakis.
\newblock Multistep neural networks for data-driven discovery of nonlinear
  dynamical systems.
\newblock {\em arXiv e-prints}, arXiv:1801.01236, 2018.

\bibitem{Raissi2019}
M.~Raissi, P.~Perdikaris, and G.~E. Karniadakis.
\newblock {Physics-informed neural networks: A deep learning framework for
  solving forward and inverse problems involving nonlinear partial differential
  equations}.
\newblock {\em Journal of Computational Physics}, 378:686--707, 2019.

\bibitem{Reppen2020}
A.~M. Reppen and H.~M. Soner.
\newblock Bias-variance trade-off and overlearning in dynamic decision
  problems.
\newblock {\em arXiv e-prints}, arXiv:2011.09349, 2020.

\bibitem{Rudy2017}
S.~H. Rudy, S.~L. Brunton, J.~L. Proctor, and J.~N. Kutz.
\newblock Data-driven discovery of partial differential equations.
\newblock {\em Science Advances}, 3(4):e1602614, 2017.

\bibitem{Rudy2019}
S.~H. Rudy, J.~N. Kutz, and S.~L. Brunton.
\newblock Deep learning of dynamics and signal-noise decomposition with
  time-stepping constraints.
\newblock {\em Journal of Computational Physics}, 396:483--506, 2019.

\bibitem{Schmidt2009}
M.~Schmidt and H.~Lipson.
\newblock Distilling free-form natural laws from experimental data.
\newblock {\em Science}, 324:81--85, 2009.

\bibitem{Shen2020_3}
Z.~Shen, H.~Yang, and S.~Zhang.
\newblock Deep network approximation characterized by number of neurons.
\newblock {\em Communications in Computational Physics}, 28:1768--1811, 2020.

\bibitem{Shen2020}
Z.~Shen, H.~Yang, and S.~Zhang.
\newblock Deep network with approximation error being reciprocal of width to
  power of square root of depth.
\newblock {\em arXiv e-prints}, arXiv:2006.12231, 2020.

\bibitem{Shen2020_2}
Z.~Shen, H.~Yang, and S.~Zhang.
\newblock Neural network approximation: Three hidden layers are enough.
\newblock {\em arXiv e-prints}, arXiv:2010.14075, 2020.

\bibitem{Sun2019}
Y.~Sun, L.~Zhang, and H.~Schaeffer.
\newblock {NeuPDE: Neural network based ordinary and partial differential
  equations for modeling time-dependent data}.
\newblock In {\em Proceedings of The First Mathematical and Scientific Machine
  Learning Conference}, volume 107 of {\em Proceedings of Machine Learning
  Research}, pages 352--372, Princeton University, Princeton, NJ, USA, 2020.
  PMLR.

\bibitem{Tipireddy2019}
R.~Tipireddy, P.~Perdikaris, P.~Stinis, and A.~Tartakovsky.
\newblock A comparative study of physics-informed neural network models for
  learning unknown dynamics and constitutive relations.
\newblock {\em arXiv e-prints}, arXiv:1904.04058, 2019.

\bibitem{Vaidyanathan2014}
S.~Vaidyanathan, C.~Volos, and V.-T. Pham.
\newblock {Hyperchaos, adaptive control and synchronization of a novel 5-D
  hyperchaotic system with three positive Lyapunov exponents and its SPICE
  implementation}.
\newblock {\em Archives of Control Sciences}, 24:409--446, 2014.

\bibitem{Wang2008}
X.~Wang and M.~Wang.
\newblock {A hyperchaos generated from Lorenz system}.
\newblock {\em Physica A: Statistical Mechanics and its Applications},
  387:3751--3758, 2008.

\bibitem{Wu2019}
K.~Wu and D.~Xiu.
\newblock Numerical aspects for approximating governing equations using data.
\newblock {\em Journal of Computational Physics}, 384:200--221, 2019.

\bibitem{Wu2020}
K.~Wu and D.~Xiu.
\newblock Data-driven deep learning of partial differential equations in modal
  space.
\newblock {\em Journal of Computational Physics}, 408:109307, 2020.

\bibitem{Xie2019}
X.~Xie, G.~Zhang, and C.~G. Webster.
\newblock Non-intrusive inference reduced order model for fluids using deep
  multistep neural network.
\newblock {\em Mathematics}, 7(8), 2019.

\bibitem{Xu2019}
Z.~J. Xu, Y.~Zhang, and Y.~Xiao.
\newblock Training behavior of deep neural network in frequency domain.
\newblock In {\em Neural Information Processing}, pages 264--274. Springer
  International Publishing, 2019.

\bibitem{Zhu2019}
Z.~Song Z.~A.-Zhu, Y.~Li.
\newblock A convergence theory for deep learning via over-parameterization.
\newblock In Kamalika Chaudhuri and Ruslan Salakhutdinov, editors, {\em
  Proceedings of the 36th International Conference on Machine Learning},
  volume~97 of {\em Proceedings of Machine Learning Research}, pages 242--252,
  Long Beach, California, USA, 2019. PMLR.

\bibitem{Zhang2018}
S.~Zhang and G.~Lin.
\newblock Robust data-driven discovery of governing physical laws with error
  bars.
\newblock {\em Proceedings of the Royal Society A: Mathematical, Physical and
  Engineering Sciences}, 474(2217):20180305, 2018.

\bibitem{Zhang2021}
S.~Zhang and G.~Lin.
\newblock Subtsbr to tackle high noise and outliers for data-driven discovery
  of differential equations.
\newblock {\em Journal of Computational Physics}, 428:109962, 2021.

\bibitem{Zhong2020}
M.~Zhong, J.~Miller, and M.~Maggioni.
\newblock {Data-driven discovery of emergent behaviors in collective dynamics}.
\newblock {\em Physica D: Nonlinear Phenomena}, 441:132542, 2020.

\end{thebibliography}
\bibliographystyle{plain}

\appendix
\section{Supplementary Results on Unstable LMMs}\label{Sec_appendix}
Recall that $\kappa_2(\BA_h)$ denotes the 2-condition number of the matrix $\BA_h$ corresponding to certain LMM schemes. It has been shown in Theorem \ref{Thm03} and \cite{Keller2019} that as $N\rightarrow\infty$, $\kappa_2(\BA_h)$ is uniformly bounded for stable schemes. Similar arguments also show that $\kappa_2(\BA_h)$ increases linearly for marginally stable schemes and increases exponentially for unstable schemes. Although there has been no convergence theory for unstable schemes, it is intriguing to investigate how they perform in practice.

We first consider the discovery via linear system \eqref{21}, in which the target function is approximated by grid functions. Note that $\BA_h$ is a Toeplitz-type band matrix, and hence \eqref{21} are linear difference equations. For unstable schemes, the characteristic polynomial has roots of modulus greater than 1, which causes small perturbations of the system to grow exponentially in the solution. Specifically, let us consider the perturbed system of \eqref{21},
$\BA_h(\vec{\Bf}_h+\vec{\Bepsilon})=\left[\begin{array}{c}\Bc_h+\Bdelta\\\vec{\Bq}_h\end{array}\right]$, where $\Bdelta$ is a small perturbation of the initial value $\Bc_h$, and $\vec{\Bepsilon}$ is the error between the perturbed and original solutions. Then each component of $\vec{\Bepsilon}=[\varepsilon_s,\cdots,\varepsilon_{e(N)}]^T$ is given by $\varepsilon_n=c_1\lambda_1^n+c_2\lambda_2^n+\cdots+c_{N_a}\lambda_{N_a}^n$ for $n=s,s+1,\cdots,e(N)$, where $\lambda_1,\cdots,\lambda_{N_a}$ are the roots of the polynomial \eqref{39}, and $c_1,\cdots,c_{N_a}$ are completely determined by $\Bdelta$. For unstable schemes, at least one root $\lambda$ has modulus greater than 1, and hence the error component $\varepsilon_n$ grows exponentially as $n$ increases.

In practice, since $\BA_h$ is lower-triangular, it is natural to solve \eqref{21} by forward substitution directly. However, the error accumulation discussed above occurs in the process of forward substitution. To demonstrate this, we solve the linear system \eqref{21} concerning the unstable A-M scheme ($M=2$) to discover the dynamical system \eqref{case1}. We first use forward substitution and compute the relative discovery error $\left\|\vec{\Bf}_h'-\vec{\Bf}\right\|_2/\left\|\vec{\Bf}\right\|_2$, where $\vec{\Bf}_h'$ is the computed solution of the linear system and $\vec{\Bf}$ defined by \eqref{41} is the true governing function evaluated at grid points. It shows in Figure \ref{Fig_case1_AM2_LMM} that the discovery error increases rapidly as $h$ decreases, implying the failure of forward substitution.

We then repeat the test by employing iterative solvers such as the generalized minimal residual method (GMRES) with stopping residual $\tau=10^{-4}$. It shows in Figure \ref{Fig_case1_AM2_LMM} GMRES with this setting succeeds in obtaining decaying errors as $h$ decreases, whose orders are close to the theoretical ones \cite{Keller2019}. However, if we set a smaller stopping residual $\tau=10^{-8}$, GMRES also fails like the forward substitution. Similar results are observed when using biconjugate gradient method to solve the linear system. These comparative tests imply that the difficulty bought by unstable schemes can be lessened by using iterative solvers, but these solvers are still sensitive to the implementation parameters because of the ill-conditioning of the method.

\begin{figure}
\centering
\subfloat[FS]{
\includegraphics[scale=0.5]{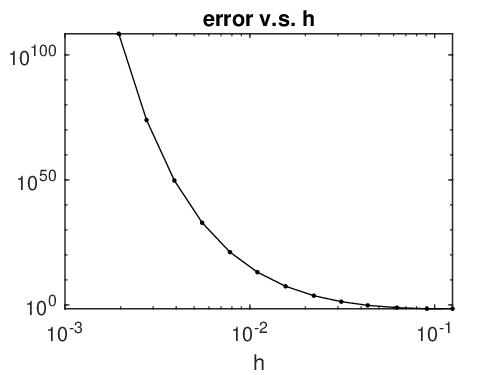}}
\subfloat[GMRES ($\tau=10^{-4}$)]{
\includegraphics[scale=0.5]{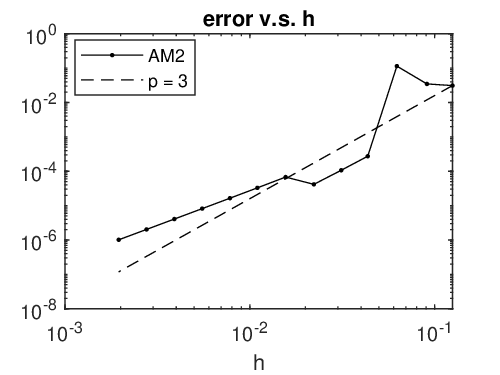}}
\subfloat[GMRES ($\tau=10^{-8}$)]{
\includegraphics[scale=0.5]{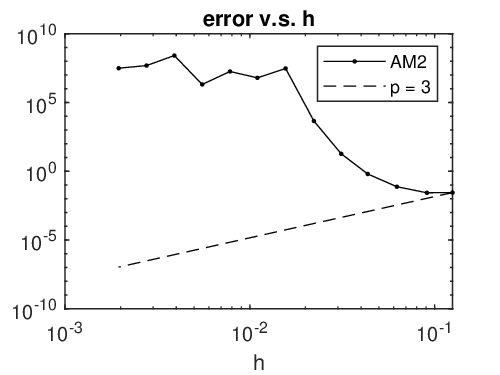}}
\caption{\em Discovery error versus $h$ using forward substitution (FS), GMRES ($\tau=10^{-4}$) or GMRES ($\tau=10^{-8}$) in the discovery of the model problem \eqref{case1}.}
\label{Fig_case1_AM2_LMM}
\end{figure}

Next, we consider the discovery using linear approximation forms. Suppose the approximation set $\mathcal{A}$ is a linear space with basis $\{\psi_1,\cdots,\psi_{d'}\}$, then the governing function can be approximated by the form $\hf_{\mathcal{A}}=c_1\psi_1+\cdots+c_{d'}\psi_{d'}$ with coefficients $c_1,\cdots,c_{d'}$ to be determined. Note that $\mathcal{A}$ can be spaces of polynomials, finite elements, splines, etc. Under the LMM framework, we aim to compute $c_1,\cdots,c_{d'}$ such that
\begin{equation}\label{42}
\BA_h\BPsi_h\vec{\Bc}=\left[\begin{array}{c}\Bc_h\\\vec{\Bq}_h\end{array}\right],
\end{equation}
where $\BPsi_h:=[\psi_i(\Bx_n)]_{n=s,\cdots,e(N)}^{i=1,\cdots,d'}$ and $\vec{\Bc}:=[c_1,\cdots,c_{d'}]^T$. Note that \eqref{42} is a linear system similar to \eqref{21} but might be square if $d'=t(N)$, overdetermined if $d'<t(N)$ or underdetermined if $d'>t(N)$. It is natural to solve \eqref{42} by first solving
\begin{equation}\label{43}
\BA_h\vec{\By}=\left[\begin{array}{c}\Bc_h\\\vec{\Bq}_h\end{array}\right]
\end{equation}
for $\vec{\By}$, then solve $\BPsi_h\vec{\Bc}=\vec{\By}$ for $\vec{\Bc}$. However, solving \eqref{43} faces the same issue as the linear system \eqref{21} discussed above.

Therefore, it implies that with unstable LMM schemes, both grid function approximation and linear form approximation are less robust due to the ill-conditioning. One might attempt to overcome such difficulties by developing effective preconditioners for the linear system \eqref{21} or \eqref{42}, at least when there is no high demand on the numerical precision.

In comparison, the network approximation shows more robustness in practice to get solutions within the ranges of optimization errors (Section \ref{Sec_case1_AM}), which is conjectured to be a consequence of the implicit regularization. All these attempts and conjectures may be further studied in future work.

\end{document}